\documentclass[11pt,reqno]{amsproc}
\usepackage[all]{xy}
\usepackage[colorlinks,urlcolor=blue,citecolor=blue,linkcolor=blue]{hyperref}
\usepackage{natbib,xcolor,graphicx,stix2,bm,anysize}
\usepackage{caption,subcaption,eqnarray}

\marginsize{15mm}{15mm}{6mm}{5mm}

\theoremstyle{theorem}
\newtheorem{theorem}{Theorem}[section]
\newtheorem{proposition}[theorem]{Proposition}
\newtheorem{lemma}[theorem]{Lemma}

\theoremstyle{remark}

\newtheorem{remark}[theorem]{\it Remark}

\theoremstyle{definition}

\AtBeginDocument{.}

\numberwithin{equation}{section}
\allowdisplaybreaks

\newcommand{\tmop}[1]{\ensuremath{\operatorname{#1}}}

\begin{document}
\title[biharmonic equations with $p$-Laplacian]{On biharmonic equations with $p$-Laplacian and indefinite potentials or critical nonlinearity}
\author[S. Liu \& K. Perera]{Shibo Liu\quad Kanishka Perera\vspace{-1em}}
\dedicatory{Department of Mathematics and Systems Engineering, Florida Institute of Technology\\
Melbourne, FL 32901, USA}
\thanks{Emails: \texttt{\bfseries sliu@fit.edu} (S. Liu)}
\begin{abstract}
In this paper we consider nonlinear biharmonic equations with $p$-Laplacian ($p\ge2$) of the form
  $$
  \left\{ \begin{array}{l}
    \Delta^2 u - \Delta_p u + V (x) u = f (x, u) \text{,}\\
    u \in H^2 (\mathbb{R}^N) \text{,}
  \end{array} \right.
  $$
  where the potential $V(x)$ may be indefinite. Using local linking and Morse theory, nontrivial solutions are obtained. In case the nonlinearity $f(x,\cdot)$ is odd, we obtain a sequence of large energy solutions. In the second part of the paper, for bounded positive potential, we get multiple solutions for the case that $$f(x,u)=\lambda g (x) | u |^{q - 2} u + | u |^{m
  - 2} u$$
  with exponent $m$ critical or subcritical.
\end{abstract}
\maketitle

\section{Introduction}

The biharmonic operator $\Delta^2$ appears in many problems in physics and
engineering such as the nonlinear oscillation in a suspension bridge
{\cite{MR1084570}} and the static deflection of an elastic plate in a fluid
{\cite{Abrahams2002}}. In the last two decades elliptic equations
\begin{equation}
  \left\{ \begin{array}{l}
    \Delta^2 u - \Delta u + \lambda V (x) u = f (x, u) \text{,}\\
    u \in H^2 (\mathbb{R}^N) \text{}
  \end{array} \right. \label{1e}
\end{equation}
involving the biharmonic operator have captured great research interests. Such
equations were studied by {\cite{MR2735556,MR2927503}} for the case that the
Lebesgue measure $\mathfrak{m} (\{ V \leq b \}) < \infty$ for all $b > 0$, and
by {\cite{MR2948252,MR3062426}} for the case $\mathfrak{m} (\{ V \leq b \}) <
\infty$ for some $b > 0$. In the latter case, solutions are obtained for
$\lambda \gg 1$.

{\cite{MR3723523}} considered (\ref{1e}) with a
sublinear term adding to the nonlinearity:
\[ f (x, u) = g (x, u) + \mu \xi (x) | u |^{q - 2} u \]
for some $q \in (1, 2)$. The fact that the sublinear term prevents the zero
function $u = 0$ to be a local minimizer of $\Phi$ enables the authors to get two
nontrivial solutions via the Ekeland variational principle {\cite{MR346619}}
and the mountain pass theorem {\cite{MR0370183}}. In
{\cite{MR4154811,MR4066583}}, this result was extended to more general
equations
\begin{equation}
  \left\{ \begin{array}{l}
    \Delta^2 u - \Delta_p u + V (x) u = f (x, u) \text{,}\\
    u \in H^2 (\mathbb{R}^N) \text{,}
  \end{array} \right. \label{e1}
\end{equation}
where $p \in [2, 2^{\ast}]$ being $2^{\ast} = \frac{2 N}{N - 2}$ for $N \geq
3$ and $2^{\ast} = \infty$ for $N \in \{ 1, 2 \}$. Along this line, when $g
(x, \cdot)$ is odd, the existence of infinitely many solutions was obtained
by {\cite{MR4597960}}. See {\cite{MR4342976,MR4385750}} for more recent
results on the problem (\ref{1e}) with critical nonlinearity $f (x, t) \sim |
t |^{2_{\ast} - 2} t$ as $| t | \rightarrow \infty$, where the critical
Sobolev exponent $2_{\ast} = \frac{2 N}{N - 4}$ for $N \geq 5$ and $2_{\ast} $ is any large number for $N \leq 4$. See also d'Avenia {\emph{et al}}.\ {\cite{MR4547556}}
for recent results on (\ref{1e}) with general nonlinearity in the spirit of
{\cite{MR695535,MR695536}}.

{\cite{MR3569412}} initiated the study of biharmonic
equations with $p$-Lap\-l\-a\-c\-i\-a\-n of the form (\ref{e1}). Since then, elliptic
equations of the form (\ref{e1}) have been extensively studied. In addition to
{\cite{MR4597960,MR4154811,MR4066583}} mentioned above, the reader is referred
to {\cite{MR4901051,MR4695885,MR4741178}} for the case that the nonlinearity
grows critically, {\cite{MR4681495}} for positive or zero mass cases in the
sense of {\cite{MR695535,MR695536}}. See also {\cite{MR5011841}} for results
on normalized solutions (solutions with prescribed $L^2$-norm).

In all these results on (\ref{e1}) the operator $\mathcal{H}= \Delta^2 + V
(x)$ is positive definite and the corresponding quadratic form $Q$ given in
(\ref{Bb}) below defines a norm. As a consequence, one can apply the mountain pass
theorem to find solutions at positive energy level. The purpose of this paper
is to study the case that $\mathcal{H}$ may not be positive definite, this
happens if the ``size'' of $V_- = \min \{ V, 0 \}$ is large.

Suppose $V \in C (\mathbb{R}^N)$ is {\emph{bounded below}}. Take $m > 1$ such
that $\tilde{V} = V + m \geq 1$. On the subspace
\[ X = \left\{ u \in H^2 \left| \int V (x) u^2 < \infty \right. \right\} \]
of $H^2 = H^2 (\mathbb{R}^N)$, we equip the norm
\[ \| u \|_V = \left( \int (| \Delta u |^2 + \tilde{V} (x) u^2) \right)^{1 /
   2} \text{.} \]
As in {\cite[{\textsection}2]{MR3569412}}, by Gagliardo--Nirenberg inequality
{\cite{MR109940}} there is $A_0 > 0$ such that
\begin{align}
  \int | \nabla u |^2 & \leq  A_0^2 \left( \int | \Delta u |^2 \right)^{1 /
  2} \left( \int u^2 \right)^{1 / 2} \nonumber\\
  & \leq  A_0^2 \left( \int | \Delta u |^2 \right)^{1 / 2} \left( \int
  \tilde{V} (x) u^2 \right)^{1 / 2} \nonumber\\
  & \leq  \frac{A_0^2}{2} \int (| \Delta u |^2 + \tilde{V} (x) u^2) \text{.}
  \label{0} 
\end{align}
Consequently, $X$ is a Hilbert space continuously embedded into $H^2
(\mathbb{R}^N)$.

\begin{remark}
  \label{r1}From (\ref{0}), it is clear that: (a) If $V$ is {\emph{bounded}},
  then $\| \cdot \|_V$ is equivalent to the standard $H^2$-norm and our
  Hilbert space $X$ is precisely $H^2 (\mathbb{R}^N)$.
  
  (b) If $V$ is coercive, then equipped with the norm
  \[ \| u \|_{\#} = \left( \int (| \Delta u |^2 + \tilde{V} (x) u^2 + | \nabla
     u |^2) \right)^{1 / 2} \]
  $X$ can be compactly embedded into $L^2 = L^2 (\mathbb{R}^N)$, see
  {\cite[Lem 2.1]{MR3276713}}. By (\ref{0}), the two norms $\| \cdot \|_V$
  and $\| \cdot \|_{\#}$ are equivalent. Hence, with our norm $\| \cdot
  \|_V$ or its equivalent norm $\| \cdot \|$ defined at the beginning of
  {\textsection}2, we have compact embedding $X \hookrightarrow L^m$ for $m
  \in [2, 2_{\ast})$.
\end{remark}

To state our results, we need the following assumptions on the potential $V
(x)$ and the nonlinearity $f (x, u)$.
\begin{enumerate}
  \item[$(V_0)$] $V \in C (\mathbb{R}^N)$ is bounded below such that the
  quadratic form $Q : X \rightarrow \mathbb{R}$,
  \begin{equation}
    Q (u) = \frac{1}{2} \int (| \Delta u |^2 + V (x) u^2) \label{Bb}
  \end{equation}
  is nondegenerate and the negative space of $Q$ is finite dimensional.
  
  \item[$(f_{\ast})$] $f \in C (\mathbb{R}^N \times \mathbb{R})$ is
  subcritical: there are $C > 0$ and $q \in (p, 2_{\ast})$ such that
  \[ | f (x, t) | \leq C (| t | + | t |^{q - 1}) \text{\qquad for $(x, t) \in
     \mathbb{R}^N \times \mathbb{R}$.} \]
  \item[$(f_0)$] As $t \rightarrow 0,$ $f (x, t) = o (t)$ uniformly on $x \in
  \mathbb{R}^N$.
  
  \item[$(f_1)$] There is $\mu > p$ such that
  \[ 0 < F (x, t) := \int_0^t f (x, \cdot) \leq \frac{1}{\mu} f (x, t)
     t \text{\quad for $x \in \mathbb{R}^N$, $t \neq 0$.} \]
  \item[$(f_2)$] For every $r > 0$, we have
  \[ \lim_{| x | \rightarrow \infty} \sup_{| t | \leq r} \left| \frac{f (x,
     t)}{t} \right| = 0 \text{.} \]
\end{enumerate}
We remark that the assumption $(V_0)$ on the potential $V$ is quite general.
For example, suppose
\[ 0 < b := \lim_{| x | \rightarrow \infty} V (x) \leq + \infty \text{.}
\]
Similar to {\cite[Thms 8.2.1, 8.3.2]{MR2365191}}, the operator $\mathcal{H}=
\Delta^2 + V (x)$ is self-adjoin and bounded below. The spectrum of
$\mathcal{H}$ below $b$ consists of eigenvalues $\lambda_1 \leq \lambda_2 \leq
\cdots$ of finite multiplicity, which can only accumulate at $b$. Therefore, if $0 \in
(\lambda_{\ell}, \lambda_{\ell + 1})$ then $Q$ given by (\ref{Bb}) is
nondegenerate with finite dimensional negative space spanned by eigenfunctions
corresponding to $\{ \lambda_i \}_{i = 1}^{\ell}$.

\begin{theorem}
  \label{t1}Suppose $(V_0)$, $(f_{\ast})$, $(f_0)$, $(f_1)$ and $(f_2)$ are
  satisfied, then \eqref{e1} has a nontrivial solution. If moreover $f (x,
  \cdot)$ is odd, then \eqref{e1} has a sequence of solutions $\{ u_n \}$
  such that
  \begin{equation}
    \frac{1}{2} \int (| \Delta u_n |^2 + V (x) u_n^2) + \frac{1}{p} \int |
    \nabla u_n |^p - \int F (x, u_n) \text{} \rightarrow + \infty \text{.}
    \label{E}
  \end{equation}
\end{theorem}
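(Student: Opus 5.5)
We work with the functional
\[ \Phi (u) = Q (u) + \frac{1}{p} \int | \nabla u |^p - \int F (x, u) \]
on $X$, which is $C^1$ by $(f_{\ast})$ and (\ref{0}). Its critical points are weak solutions of \eqref{e1}. By $(V_0)$ we split $X = X^- \oplus X^+$, orthogonal with respect to $Q$, with $\dim X^- < \infty$ and $\pm Q (u) \geq c \| u \|^2$ on $X^{\pm}$. The plan is to obtain the first solution from a linking theorem of Rabinowitz type (equivalently, a local linking / Morse theory argument at $0$ and at a linking level). The infinite sequence will come from the fountain theorem of Bartsch applied to the even functional.

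\textbf{Step 1 ($(PS)$ condition).} Let $\Phi (u_n)$ be bounded and $\Phi' (u_n) \to 0$. Using $(f_1)$ we compute
\[ \Phi (u_n) - \frac{1}{\mu} \langle \Phi' (u_n), u_n \rangle = \left( \frac{1}{2} - \frac{1}{\mu} \right) 2 Q (u_n) + \left( \frac{1}{p} - \frac{1}{\mu} \right) \int | \nabla u_n |^p + \int \left( \frac{1}{\mu} f u_n - F \right) . \]
The trouble is that $Q$ is indefinite, so this identity alone does not give boundedness. I would treat the $X^-$ component separately. Test $\Phi' (u_n)$ with $u_n^-$ and use $\dim X^- < \infty$, so that all norms on $X^-$ are equivalent. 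Combine this with the bound on $\int (f u_n / \mu - F) + c \int | \nabla u_n |^p$ coming from the identity above, and with the superquadratic growth of $F$ from $(f_1)$ (namely $F \geq c | t |^{\mu}$ for $| t | \geq 1$). Arguing by contradiction on $\| u_n \| \to \infty$ and normalizing $w_n = u_n / \| u_n \|$, the term $\int F (x, u_n) / \| u_n \|^2$ blows up unless $w_n \to 0$; a contradiction results in either case. I expect boundedness to be the main obstacle, because the $p$-Laplacian term and the negative space interact. If this fails I would fall back on Cerami sequences.

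Once $(u_n)$ is bounded, a subsequence converges weakly. Assumptions $(f_0)$ and $(f_2)$, together with $(f_{\ast})$ and local compactness, show that $\int f (x, u_n) (u_n - u) \to 0$: the contribution from $| x | > R$ is small uniformly, by $(f_2)$ applied after truncating large $| t |$ via $(f_{\ast})$ and Sobolev. The map $u \mapsto - \Delta_p u$ is monotone, in fact of type $(S_+)$ for $p \geq 2$, so $\langle \Phi' (u_n) - \Phi' (u), u_n - u \rangle \to 0$. The $Q$-part then gives $\| u_n - u \| \to 0$, using finite dimensionality to handle the $X^-$ part.

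\textbf{Step 2 (geometry and first solution).} On $X^+$ we have $\int F (x, u) = o (\| u \|^2)$ by $(f_0)$ and $(f_{\ast})$, so $\Phi \geq \alpha > 0$ on $\partial B_\rho \cap X^+$ for small $\rho$. Take $e \in X^+ \setminus \{ 0 \}$ and $M = \{ u = v + s e : v \in X^-, s \geq 0, \| u \| \leq R \}$. On $X^-$ we have $\Phi \leq \frac{1}{p} \int | \nabla u |^p - \int F \leq 0$ for small norm. This needs care: $\frac{1}{p} \int | \nabla v |^p$ is positive of order $\| v \|^p$ with $p \geq 2$. I would instead use $Q (v) \leq - c \| v \|^2$, which dominates $\| v \|^p$ near $0$ when $p > 2$; for larger $\| v \|$, $F \geq c | t |^{\mu}$ with $\mu > p$ dominates in the finite-dimensional space $X^- \oplus \mathbb{R} e$. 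Hence $\Phi \leq 0$ on $\partial M$ for $R$ large. The linking theorem with $(PS)$ then yields a critical point at level $c \geq \alpha > 0$, which is nontrivial.

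\textbf{Step 3 (odd case).} Choose an orthonormal basis $\{ e_j \}$ of $X^+$ and set $Y_k = X^- \oplus \mathrm{span} \{ e_1, \ldots, e_k \}$ and $Z_k = \overline{\mathrm{span}} \{ e_j : j \geq k \}$. On $Y_k$, which is finite dimensional, $\Phi \to - \infty$ as $\| u \| \to \infty$ by $(f_1)$ and $\mu > p \geq 2$. On $Z_k$ set $\beta_k = \sup_{u \in Z_k, \| u \| = 1} | u |_q$. Then $\beta_k \to 0$: a weakly null sequence combined with $(f_2)$ gives decay of $\int F (x, u) / \| u \|^2$ uniformly in the needed sense. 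Concretely, the estimate $\int F \leq \varepsilon \| u \|^2 + C_\varepsilon \int_{| x | \leq R} | u |^q$, together with compactness on bounded domains, shows that the corresponding sup tends to $0$. This yields $\Phi \geq \frac{c}{2} \| u \|^2 - C \beta_k^q \| u \|^q$ on $Z_k$, so $\inf \Phi$ over $\{ u \in Z_k : \| u \| = r_k \}$ tends to $+ \infty$ for $r_k = (\beta_k^{- q})^{1 / (q - 2)}$ (up to constants). Since $\Phi$ is even and $(PS)$ holds, the fountain theorem gives critical values $c_k \to + \infty$, which is exactly \eqref{E}.
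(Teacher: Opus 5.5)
\textbf{Step 2 has a genuine gap.} The linking theorem you invoke needs $\Phi\le0$ on all of $\partial M$, and $\partial M$ contains the entire disc $X^-\cap\bar B_R$. You only control its two ends:
\begin{itemize}
\item small $\|v\|$, where $-\frac12\|v\|^2$ beats $C\|v\|^p$ (this needs $p>2$, as does the paper's Lemma~\ref{8});
\item large $\|v\|$, by anticoercivity.
\end{itemize}
At intermediate norms the term $\frac1p\int|\nabla v|^p$ can win. Take $f(x,t)=\varepsilon a(x)|t|^{\mu-2}t$ with $a>0$ continuous and bounded, $a(x)\to0$ as $|x|\to\infty$, and $\mu\in(p,2_*)$. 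This $f$ satisfies $(f_*)$, $(f_0)$, $(f_1)$ and $(f_2)$, and for $\phi\in X^-$ with $\|\phi\|=1$,
\[ \Phi(t\phi)=-\frac{t^2}{2}+\frac{t^p}{p}\int|\nabla\phi|^p-\frac{\varepsilon t^\mu}{\mu}\int a|\phi|^\mu . \]
If $p>2$ and $t_0^{p-2}\int|\nabla\phi|^p>p/2$, then $\Phi(t_0\phi)>0$ for all small $\varepsilon$. So ``$\Phi\le0$ on $\partial M$'' fails for every $R\ge t_0$, and you offer no substitute such as $\sup_{\partial M}\Phi<\inf_{\partial B_\rho\cap X^+}\Phi$. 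This is exactly the obstruction the paper points out in its introduction. Also, local linking is not ``equivalent'' to global linking: it only imposes sign conditions in a small ball.

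The missing idea is to take the global information from infinity rather than from $\partial M$. The paper uses Lemma~\ref{8} only to get $C_\ell(\Phi,0)\ne0$ with $\ell=\dim X^-$. It then proves $C_*(\Phi,\infty)=0$ as follows. By $(f_1)$ there is $A>0$ such that $\left.\frac{\mathrm{d}}{\mathrm{d}t}\right|_{t=1}\Phi(tu)<0$ whenever $\Phi(u)\le-A$ (Lemma~\ref{4}). Together with $\Phi(tu)\to-\infty$, this gives for $\|u\|=1$ a unique $t_u>0$ with $\Phi(t_uu)=-A$, depending continuously on $u$. That yields a radial deformation retraction of $X\setminus B_1$ onto $\Phi^{-A}$, so $H_*(X,\Phi^{-A})\cong H_*(X,X\setminus B_1)=0$. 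Then $C_\ell(\Phi,0)\ne C_\ell(\Phi,\infty)$ produces a nonzero critical point. No sign of $\Phi$ on $X^-$ away from $0$ is ever used.

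\textbf{Step 3 also has a gap, though a repairable one.} The claim $\beta_k=\sup_{Z_k\cap\partial B_1}|u|_q\to0$ is the standard consequence of a \emph{compact} embedding $X\hookrightarrow L^q$, which you do not have. For bounded $V$, $X=H^2(\mathbb{R}^N)$, and projecting far translates of a fixed bump onto $Z_k$ changes them only by $o(1)$, because the complementary projection has finite rank and the translates are weakly null. So $\beta_k$ stays bounded away from $0$. Assumption $(f_2)$ constrains $f$, not $|u|_q$. Your ``concrete'' estimate drops the piece $\int_{|x|>R,\,|u|>r}|u|^q$, which $(f_2)$ does not control. Moreover, a bound on the unit sphere does not transfer to spheres of radius $r_k\to\infty$, since $\int F(x,\cdot)$ is not homogeneous.

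You could salvage the fountain theorem diagonally. For each fixed $r$, $u\mapsto\int F(x,ru)$ is weakly continuous at $0$ under $(f_*)$ and $(f_2)$, so $\sup_{Z_k\cap\partial B_r}\int F\to0$ as $k\to\infty$; then let $r_k\to\infty$ slowly. The paper avoids this entirely with Rabinowitz's symmetric mountain pass theorem. That theorem only needs $\Phi\ge\alpha>0$ on one small sphere in $Z=X^+$, which is immediate from $(f_0)$ and $(f_*)$, plus anticoercivity on finite-dimensional subspaces.

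\textbf{Step 1 is essentially fine.} Your monotonicity argument for strong convergence is a slightly more direct route than Lemmas~\ref{9}--\ref{l4}. Two small corrections:
\begin{itemize}
\item Dividing by $\|u_n\|^2$ requires first extracting $\int|\nabla u_n|^p\le C(1+\|u_n\|+\|u_n^-\|^2)$ from your identity. Otherwise, normalize by $\|u_n\|^p$ as the paper does.
\item A uniform bound $F\ge c|t|^\mu$ on $|t|\ge1$ is incompatible with $(f_2)$, which forces $F(x,\pm1)\to0$ as $|x|\to\infty$. Only the pointwise version combined with Fatou's lemma is available, and that suffices.
\end{itemize}
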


The solutions of (\ref{e1}) will be obtained as critical points of the $C^1$-functional
$\Phi : X \rightarrow \mathbb{R}$,
\begin{equation}
  \Phi (u) = \frac{1}{2} \int (| \Delta u |^2 + V (x) u^2) + \frac{1}{p} \int
  | \nabla u |^p - \int F (x, u) \text{.} \label{B}
\end{equation}
If $\inf_{\mathbb{R}^N} V > 0$, then the quadratic form $Q$ is positive
definite and the zero function $u = 0$ is a local minimizer of $\Phi$, the
mountain pass theorem {\cite{MR0370183,MR845785}} can be applied to get a
nontrivial solution of (\ref{e1}). If the $p$-Laplacian term $\Delta_p u$ is
removed from (\ref{e1}), then the term involving $| \nabla u |^p$ does not
show up in $\Phi (u)$. As a consequence $\Phi$ verifies the linking geometry
and the linking theorem {\cite[Thm 5.3]{MR845785}} can be applied. In our
case, due to the presence of the nonnegative term $\int | \nabla u |^p$, $\Phi$ is not nonpositive
on the negative space of $Q$ and the linking geometry is not satisfied anymore. As in the works
{\cite{MR3303004,MR3656292}} on nonlinear indefinite Schr{\"o}dinger--Poisson
systems, we will apply local linking {\cite{MR802575,MR1312028}} and Morse theory
{\cite{MR1196690}} to get the desired solutions of our problem (\ref{e1}).

Note that $(f_1)$ is a version of the classical Ambrosetti--Rabinowitz
condition introduced by {\cite{MR0370183}} which implies
\begin{equation}
  \lim_{| t | \rightarrow \infty} \frac{F (x, t)}{| t |^p} = + \infty
  \text{\qquad a.e.\ $x \in \mathbb{R}^N$.} \label{ep}
\end{equation}
A typical $f$ satisfying the above assumptions is $f (x, t) = a (x) | t |^{q -
2} t$ for some $q>p$ and continuous $a : \mathbb{R}^N \rightarrow (0, \infty)$ decaying
to zero.

As in {\cite{MR3656292}}, the condition $(f_2)$ is borrowed from
{\cite{MR2038142}} and will \emph{only} be used to derive
\begin{equation}
  \varlimsup_{n \rightarrow \infty} \int f (x, u_n) (u_n - u) \leq 0 \text{}
  \label{Wi}
\end{equation}
from $u_n \rightharpoonup u$ in $X$. Hence, in Theorem \ref{t1} we can replace
$(f_2)$ by other condition ensuring this compact property, such as
\begin{equation}
  | f (x, t) | \leq \alpha_1 (x) | t |^{q_1 - 1} + \alpha_2 (x) | t |^{q_2 -
  1} \label{F}
\end{equation}
for some $q_i \in [2, 2_{\ast})$ and nonnegative $\alpha_i \in L^{q_i^{\#}}
(\mathbb{R}^N) \cap L_{\mathrm{loc}}^{\infty} (\mathbb{R}^N)$, where $q_i^{\#}
\in (1, \infty)$ can be arbitrary for $N \leq 4$ and $q_i^{\#} =
\frac{2_{\ast}}{2_{\ast} - q_i}$ for $N \geq 5$.

\begin{theorem}
  \label{t2}Suppose $(V_0)$, $(f_{\ast})$, $(f_1)$ and \eqref{F} are
  satisfied.
  \begin{enumerate}
    \item[(a)] If $(f_0)$ is satisfied, then \eqref{e1} has a nontrivial solution.
    
    \item[(b)] If $f (x, \cdot)$ is odd, then \eqref{e1} has a sequence of
    solutions $\{ u_n \}$ satisfying \eqref{E}.
  \end{enumerate}
\end{theorem}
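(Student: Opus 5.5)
Theorem \ref{t2} should follow from the proof of Theorem \ref{t1}. As the paper itself notes, $(f_2)$ enters that argument only through the compactness property \eqref{Wi}. So I plan to show that \eqref{F} also yields \eqref{Wi} whenever $u_n\rightharpoonup u$ in $X$, and then rerun the argument for Theorem \ref{t1} word for word. That argument combines the local linking at $0$ from $(V_0)$ and $(f_0)$ with the Morse-theoretic critical groups for part (a), and uses the symmetric version (a fountain-type theorem built on the finite-codimensional splitting given by $Q$) for part (b).

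To get \eqref{Wi} from \eqref{F}, let $u_n\rightharpoonup u$ in $X$. Then $\{u_n\}$ is bounded in $X\hookrightarrow H^2(\mathbb{R}^N)$, and by Rellich's theorem $u_n\to u$ in $L^s_{\rm loc}$ for $s\in[2,2_\ast)$. It suffices to show that $\int \alpha_i(x)|u_n|^{q_i-1}|u_n-u|\to0$ for $i=1,2$. Fix $R>0$ and split $\mathbb{R}^N$ into $B_R$ and its complement.
\begin{itemize}
\item On $B_R$, $\alpha_i\in L^\infty(B_R)$ and $u_n$ is bounded in $L^{q_i}$. H\"older's inequality gives a bound $C\|\alpha_i\|_{L^\infty(B_R)}\|u_n\|_{q_i}^{q_i-1}\|u_n-u\|_{L^{q_i}(B_R)}$, which tends to $0$.
\item On $B_R^c$, take $N\ge5$ and apply H\"older with exponents $q_i^{\#}$, $2_\ast/(q_i-1)$ and $2_\ast$. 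These are conjugate, since $\frac{2_\ast-q_i}{2_\ast}+\frac{q_i-1}{2_\ast}+\frac1{2_\ast}=1$. The integral is then bounded by $\|\alpha_i\|_{L^{q_i^\#}(B_R^c)}\|u_n\|_{2_\ast}^{q_i-1}\|u_n-u\|_{2_\ast}\le C\|\alpha_i\|_{L^{q_i^\#}(B_R^c)}$, using the embedding $H^2\hookrightarrow L^{2_\ast}$. This is small for $R$ large, uniformly in $n$.
\item For $N\le4$, $H^2\hookrightarrow L^s$ for every $s\in[2,\infty)$. Given any $q_i^\#\in(1,\infty)$, choose a large $s$ with $\frac1{q_i^\#}+\frac{q_i}{s}=1$ and argue the same way.
\end{itemize}
Letting $n\to\infty$ and then $R\to\infty$ gives the claim, which is in fact stronger than \eqref{Wi}. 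The same splitting also shows that $u\mapsto\int F(x,u)$ is weakly continuous and that $\int f(x,u_n)v\to\int f(x,u)v$, which is needed to pass to the limit in $\Phi'(u_n)\to 0$.

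With \eqref{Wi} available, the Palais--Smale (or Cerami) condition is proved exactly as in Theorem \ref{t1}.
\begin{itemize}
\item By $(f_1)$, compute $\Phi(u_n)-\frac1\mu\langle\Phi'(u_n),u_n\rangle$. Since $\mu>p\ge2$, this controls $\int|\nabla u_n|^p$ and $\int F(x,u_n)$. Splitting $X=X^-\oplus X^+$ via $(V_0)$, where $\dim X^-<\infty$, then gives boundedness of $\{u_n\}$.
\item For strong convergence, pass to a weakly convergent subsequence and test $\Phi'(u_n)-\Phi'(u)$ against $u_n-u$. The map $u\mapsto\int|\nabla u|^{p-2}\nabla u\cdot\nabla v$ is monotone, and \eqref{Wi} handles the nonlinear term. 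Together with the nondegeneracy of $Q$ and $\dim X^-<\infty$, this yields $\|u_n-u\|_V\to0$.
\end{itemize}
The geometric parts of the argument do not use $(f_2)$ and carry over unchanged. For (a), $(f_0)$ and $(f_\ast)$ give the local linking at $0$. For (a) and (b), \eqref{ep} gives $\Phi\to-\infty$ on finite-dimensional subspaces. For (b), $(f_\ast)$ together with the estimate from \eqref{F} gives $\inf\Phi\to\infty$ on the spheres of $Z_k$. Hence (a) and (b) follow.

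The main obstacle is the tail estimate on $B_R^c$ when $N\ge5$: the exponent $q_i^\#$ must pair exactly with $2_\ast$. I would verify the H\"older bookkeeping carefully there, and also check that for (b) the fountain estimate $\beta_k=\sup_{Z_k}\|u\|_{q}/\|u\|\to0$ still holds without coercivity of $V$. If that estimate fails, I would replace it with the same weighted splitting, using that $\alpha_i$ decays in $L^{q_i^\#}$.
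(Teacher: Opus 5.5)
The gap is in part (b).

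Part (a) and the compactness step are fine and follow the paper. Your weighted splitting is Proposition~\ref{p4}, and boundedness is Lemma~\ref{l2}. Testing $\Phi'(u_n)-\Phi'(u)$ against $u_n-u$ and using the monotonicity of $\xi\mapsto|\xi|^{p-2}\xi$ is a valid and slightly simpler substitute for Lemma~\ref{9} in the Palais--Smale argument.

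In (b), the verification of $(I_2)$ in Theorem~\ref{t1} used $(f_0)$, which (b) does not assume. So ``the geometric parts carry over unchanged'' is not justified as written. Your replacement is a fountain argument giving $\inf\Phi\to\infty$ on spheres of $Z_k$ from $(f_\ast)$ and $\beta_k=\sup_{u\in Z_k,\,\|u\|=1}|u|_q\to0$. This fails in exactly the setting of Theorem~\ref{t2}. Let $P_k$ be the orthogonal projection onto $Y_k$. If $\beta_k\to0$, then $|u-P_ku|_q\le\beta_k\|u\|$, so $X\hookrightarrow L^q$ would be a norm limit of finite-rank maps, hence compact. But $(V_0)$ allows bounded $V$, in which case $X=H^2(\mathbb{R}^N)$, and translates of a fixed bump show this embedding is not compact. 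The same objection applies to $|u|_2$, so bounds on $F$ obtained from $(f_\ast)$ alone cannot give $b_k\to\infty$.

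Your fallback is therefore not a contingency but the actual content of (b), and it needs a step you did not supply.
\begin{itemize}
\item From \eqref{F}, $0<F(x,t)\le\sum_i q_i^{-1}\alpha_i(x)|t|^{q_i}$.
\item The weighted splitting shows $u_n\rightharpoonup0\Rightarrow\int\alpha_i|u_n|^{q_i}\to0$ (Remark~\ref{r14}).
\item You must then upgrade this to $\gamma_k=\sup_{u\in Z_k,\,\|u\|=1}\int\alpha_i|u|^{q_i}\to0$. This holds because near-maximizers $u_k\in Z_k=Y_k^\perp$ satisfy $|\langle u_k,w\rangle|\le\|w-P_kw\|\to0$, i.e.\ $u_k\rightharpoonup0$.
\end{itemize}
This is what the paper does, via a lemma from \cite{MR2092084}. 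After that it needs only one $n>\ell$ with $\inf_{Z_n\cap\partial B_1}\Phi\ge\kappa-\sup_{Z_n\cap\partial B_1}\Psi>0$ to apply Proposition~\ref{mp}. Your fountain version would also require radii $r_k\to\infty$ chosen via homogeneity, e.g.\ $r_k\sim\gamma_k^{-1/(q_i-2)}$ when $q_i>2$. That works but is more than needed.

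Incidentally, your claim that the geometry of Theorem~\ref{t1} survives can be rescued another way. $(f_1)$ makes $|t|^{-\mu}F(x,t)$ nondecreasing in $|t|$, and $(f_\ast)$ bounds $F(x,\pm1)$ uniformly in $x$. Hence $0<F(x,t)\le C(|t|^\mu+|t|^q)$ with $2<\mu\le q$, and the $(I_2)$ estimate on $Z=X^+$ then needs no $(f_0)$.

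Either way, the main obstacle is this step, not the H\"older bookkeeping you singled out, which is routine.
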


The Sobolev embedding $X \hookrightarrow L^2 (\mathbb{R}^N)$ may not be
compact. Therefore we need $(f_2)$ or (\ref{F}) to guarantee the global
Palais--Smale $(\tmop{PS})$ condition for $\Phi$, which is
{\emph{indispensable}} for applying local linking and Morse theory. If the
potential $V$ gives rise to compactness of the embedding, then (\ref{Wi}) is
guaranteed and both conditions $(f_2)$ and (\ref{F}) can be dropped.

\begin{theorem}
  \label{t3}Suppose $(V_0)$, $(f_{\ast})$ and $(f_1)$ are satisfied, $V$ is
  coercive.
  \begin{enumerate}
    \item[(a)] If $(f_0)$ is satisfied, then \eqref{e1} has a nontrivial solution.
    
    \item[(b)] If $f (x, \cdot)$ is odd, then \eqref{e1} has a sequence of
    solutions $\{ u_n \}$ satisfying \eqref{E}.
  \end{enumerate}
\end{theorem}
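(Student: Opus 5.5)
The proof of Theorem \ref{t3} will reduce to the proof of Theorems \ref{t1} and \ref{t2}. The observation made before Theorem \ref{t3} is that $(f_2)$ and \eqref{F} are used \emph{only} to derive the compactness property \eqref{Wi} from $u_n\rightharpoonup u$ in $X$. Everything else is untouched: the local linking at $0$ (from $(V_0)$ and $(f_0)$), the Morse-theoretic comparison of critical groups at $0$ and at infinity, and the symmetric mountain pass / fountain argument for odd $f$ giving \eqref{E}. All of these use only $(V_0)$, $(f_\ast)$, $(f_0)$, $(f_1)$ and the global (PS) condition. So we first verify \eqref{Wi} under coercivity of $V$. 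Then we check that the (PS) argument from the earlier theorems goes through verbatim. Parts (a) and (b) then follow exactly as in Theorem \ref{t2}(a),(b).

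\emph{Step 1: compactness.} By Remark \ref{r1}(b), when $V$ is coercive the embedding $X\hookrightarrow L^m$ is compact for every $m\in[2,2_\ast)$. Let $u_n\rightharpoonup u$ in $X$. Then $\{u_n\}$ is bounded in $X$, hence in $L^2$ and $L^q$, and $u_n\to u$ strongly in $L^2$ and in $L^q$ (recall $q<2_\ast$). By $(f_\ast)$ and H\"older,
\[ \Bigl|\int f(x,u_n)(u_n-u)\Bigr|\le C\|u_n\|_2\|u_n-u\|_2+C\|u_n\|_q^{q-1}\|u_n-u\|_q\to0 . \]
Thus $\int f(x,u_n)(u_n-u)\to 0$, which is stronger than \eqref{Wi}.

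\emph{Step 2: (PS) condition.} We repeat the earlier argument. Let $\{u_n\}$ be a (PS) sequence, so $\Phi(u_n)$ is bounded and $\Phi'(u_n)\to0$. Boundedness of $\{u_n\}$ should come from $(f_1)$ together with the finite dimensionality of the negative space of $Q$. Concretely, we consider $\Phi(u_n)-\frac1\mu\langle\Phi'(u_n),u_n\rangle$ to control $\int|\nabla u_n|^p$ and $\int F(x,u_n)$. We then control the component of $u_n$ in the negative space via $\langle\Phi'(u_n),u_n^-\rangle$ and equivalence of norms in finite dimensions, using \eqref{ep} to beat the quadratic term. This step does not use $(f_2)$ or \eqref{F}, so it carries over unchanged.

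Passing to a subsequence, $u_n\rightharpoonup u$. We test $\Phi'(u_n)-\Phi'(u)$ against $u_n-u$. The linear part gives $\|u_n^+-u^+\|^2$ minus a term that vanishes, since the negative space is finite dimensional and there weak convergence is strong. The $p$-Laplacian part is nonnegative by monotonicity of $-\Delta_p$: the standard inequality $(|a|^{p-2}a-|b|^{p-2}b)\cdot(a-b)\ge c|a-b|^p$ for $p\ge2$. The $f$-part tends to $0$ by Step 1. Hence $u_n\to u$ in $X$.

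The main obstacle, the boundedness of (PS) sequences, has already been dealt with in the proof of Theorem \ref{t1} independently of $(f_2)$. For Theorem \ref{t3} itself the only genuinely new ingredient is Step 1, which is straightforward once Remark \ref{r1}(b) is available. We then invoke the same local linking/Morse theory result for (a) and the fountain theorem for (b).
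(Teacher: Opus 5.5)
Your treatment of part (a) is sound. Step 1 is exactly the coercive case of Proposition \ref{p4}. Boundedness of (PS) sequences is Lemma \ref{l2}, which uses only $(V_0)$, $(f_\ast)$, $(f_1)$; cite it rather than your sketched route through $\langle\Phi'(u_n),u_n^-\rangle$, which you never work out. Testing $\Phi'(u_n)-\Phi'(u)$ against $u_n-u$ and using monotonicity of $-\Delta_p$ is a valid substitute for the argument of Lemma \ref{l4}, which goes through Lemma \ref{9}. You also need $\int f(x,u)(u_n-u)\to0$, but that follows from the same H\"older estimate.

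The gap is in part (b), where $(f_0)$ is \emph{not} a hypothesis. You nevertheless list $(f_0)$ among the ingredients of the symmetric mountain pass argument and call Step 1 the only new ingredient. In the proof of Theorem \ref{t1}, $(f_0)$ is precisely what verifies $(I_2)$ of Proposition \ref{mp}. It gives $|F(x,t)|\le\frac{\kappa}{2S_2^2}|t|^2+C|t|^q$, hence $\Phi>0$ on a small sphere in $Z=X^+$. Under $(f_\ast)$ alone you only have $|F(x,t)|\le C(|t|^2+|t|^q)$, and $C|u|_2^2$ can exceed $\kappa\|u\|^2$ on $X^+$, so no sphere in $X^+$ need work.

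Appealing to ``exactly as in Theorem \ref{t2}(b)'' does not close this, because your premise that \eqref{F} is used only to obtain \eqref{Wi} is false. In Theorem \ref{t2}(b), \eqref{F} is used a second time (Remark \ref{r14}) to show that $\Psi$ from \eqref{17} is weakly continuous at $0$. That is what yields $\sup_{Z_n\cap\partial B_1}|\Psi|\to0$ on finite-codimensional subspaces $Z_n\subset X^+$. In the coercive setting you must supply this yourself.

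The missing step is short. Take an orthogonal basis $\{\phi_i\}$ of $X$ whose first $\ell$ elements span $X^-$, and set $Z_n=\overline{\mathrm{span}}\{\phi_i\mid i\ge n+1\}$ for $n\ge\ell$. Any sequence with $u_n\in Z_n\cap\partial B_1$ converges weakly to $0$. By the compact embedding, $\beta_n:=\sup_{u\in Z_n\cap\partial B_1}\max\{|u|_2,|u|_q\}\to0$. Then \eqref{Z} gives $\Phi(u)\ge\kappa-C(\beta_n^2+\beta_n^q)>0$ on $Z_n\cap\partial B_1$ for $n$ large, which is $(I_2)$. Without this step part (b) is not proved.
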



The above theorems will be proved by looking for critical points of the variational functional $\Phi$, which satisfies the global $(\tmop{PS})$ condition under the assumptions of the theroems. In the last section of this paper, we will also study the case that the variational functional only satisfies the local $(\tmop{PS})$ condition. For this case, we can only handle the definite case that $\inf_{\mathbb{R}^N}V>0$. 
Consider
\begin{equation}
\left\{ \begin{array}{l}
  \Delta^2 u - \Delta_p u + V (x) u = \lambda g (x) | u |^{q - 2} u + | u |^{m
  - 2} u\text{,}\\
    u \in H^2 (\mathbb{R}^N) \text{,}
  \end{array} \right. \label{he2}
\end{equation}
where $2\le p<q<m\le 2_*$. 
Note that if $m = 2_{\ast}$ the problem (\ref{he2}) is of critical growth.

\begin{theorem}
  \label{ht5}Let $V \in L^{\infty} (\mathbb{R}^N)$, $\inf_{\mathbb{R}^N} V >
  0$; $g \in L^{2_{\ast} / (2_{\ast} - q)} (\mathbb{R}^N)$, $g > 0$ on some
  nonempty open set $\Omega$. If $2 < p \le q < m \leq 2_{\ast}$ or $2 = p < q < m\le 2_\ast$, then given
  $k \in \mathbb{N}$, there is $\lambda_0 > 0$ such that if $\lambda>\lambda_0$ then \eqref{he2} has $k$
  pairs of solutions.
\end{theorem}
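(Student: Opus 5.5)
We work with the even functional $\Phi_\lambda : H^2(\mathbb{R}^N)\to\mathbb{R}$,
\[ \Phi_\lambda(u)=\frac12\int(|\Delta u|^2+V(x)u^2)+\frac1p\int|\nabla u|^p-\frac{\lambda}{q}\int g|u|^q-\frac1m\int|u|^m , \]
which is $C^1$ by Remark~\ref{r1}(a). Since $\inf V>0$, we take $\|u\|^2=\int(|\Delta u|^2+V u^2)$ as the norm. We will obtain $k$ pairs of nontrivial critical points at negative energy levels. The tool is a symmetric multiplicity theorem of Clark type, namely Kajikiya's version: if $\Phi$ is even, bounded below, $\Phi(0)=0$, satisfies $(\mathrm{PS})_c$ for $c<0$, and has a genus-$k$ symmetric set on which $\Phi<0$, then it has at least $k$ pairs of critical points with negative values. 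Because $\Phi_\lambda$ is not bounded below (the $|u|^m$ term dominates at infinity), we first truncate it. The estimates behind the truncation are
\[ \frac{\lambda}{q}\int g|u|^q\le \lambda C_1\|u\|^q \quad\text{(H\"older with } g\in L^{2_*/(2_*-q)}\text{ and Sobolev)}, \qquad \int|u|^m\le C_2\|u\|^m . \]
These give $\Phi_\lambda(u)\ge \frac12\|u\|^2-\lambda C_1\|u\|^q-C_2\|u\|^m$ when $p=2<q$. When $p>2$ we additionally keep the $\frac1p\|\nabla u\|_p^p$ term.

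\textbf{Key step: where the hypothesis $\lambda$ large enters.} I read the theorem as a large-$\lambda$ statement. In the regime $q>2$ the $\lambda$-term is superquadratic, so large $\lambda$ pushes $\Phi_\lambda$ negative on a $k$-dimensional set. The compactness threshold also changes with $\lambda$, so the plan is to rescale. Set $u=\lambda^{-1/(m-q)}v$. Then the $\lambda|u|^q$ and $|u|^m$ terms acquire the common factor $\lambda^{-q/(m-q)}$, while the quadratic term gets $\lambda^{-2/(m-q)}$ and the $p$-term gets $\lambda^{-p/(m-q)}$. Multiplying through by $\lambda^{q/(m-q)}$ gives
\[ \lambda^{q/(m-q)}\Phi_\lambda(u)=\lambda^{(q-2)/(m-q)}\,\frac12\|v\|^2+\lambda^{(q-p)/(m-q)}\,\frac1p\int|\nabla v|^p-\frac1q\int g|v|^q-\frac1m\int|v|^m . \]
In the case $p=q$ the $p$-term coefficient is $\lambda^0=1$. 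I expect the conditions $2<p\le q$ or $2=p<q$ to be exactly what makes this bookkeeping work.

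\textbf{Construction of the negative set and the truncation.} Choose a $k$-dimensional subspace $E_k$ of $C_c^\infty(\Omega)$. Since $g>0$ on $\Omega$, the quantity $\int g|v|^q$ is a norm to the power $q$ on $E_k$. On the unsphere $\{\|v\|=\rho\}\cap E_k$ the positive terms are $O(\rho^2+\rho^p)$ and the negative $g$-term is $-c\rho^q$, using equivalence of norms on $E_k$. Hence for each $k$ there exist $\rho$ and then $\lambda_0$ such that $\Phi_\lambda<0$ on a sphere in $E_k$, which has genus $k$. Next we truncate with a cutoff $\tau(\|u\|)$ multiplying the $|u|^m$ and $g$-terms, as in Garc\'ia Azorero--Peral. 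This is done in the region where the lower bound $h(t)=\frac12 t^2-\lambda C_1 t^q-C_2t^m$ (suitably adjusted) has the right shape, so that the truncated functional $\tilde\Phi$ is even and coercive, and $\tilde\Phi(u)<0$ implies $\tilde\Phi=\Phi_\lambda$ near $u$. Where the shape of $h$ is delicate, I would try the alternative of working on a ball with Ljusternik--Schnirelmann and deformation along a bounded region.

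\textbf{Main obstacle: local $(\mathrm{PS})_c$ at negative levels.} This is the hardest step, especially for $m=2_*$ on all of $\mathbb{R}^N$. First, PS sequences are bounded because of the $\mu$-type combination $\Phi-\frac1m\langle\Phi',u\rangle$ together with $q\ge p\ge2$. Next, weak limits handle the $g$-term compactly, since $g\in L^{2_*/(2_*-q)}$ makes $u\mapsto\int g|u|^q$ weakly continuous. For the $p$-Laplacian we use monotonicity of $-\Delta_p$ in the standard $(S_+)$ manner. For the critical term we apply Brezis--Lieb to $w_n=u_n-u$. This gives $\|w_n\|^2-\|w_n\|_m^m\to0$ and $c\ge\Phi(u)+(\frac12-\frac1m)\lim\|w_n\|^2$. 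If $w_n\not\to0$, then $\|w_n\|^2\ge S^{m/(m-2)}$, so $c\ge \Phi(u)+(\frac12-\frac1m)S^{m/(m-2)}$. With $\Phi(u)\ge -D\lambda^{\gamma}$ from the same $h$-estimate, this contradicts $c<0$ provided $D\lambda^\gamma<(\frac12-\frac1m)S^{m/(m-2)}$. This is where the parameter smallness or largeness must be reconciled with the rescaling above. For subcritical $m$ on $\mathbb{R}^N$ there is still a loss of compactness by translation, and it is handled by the same energy-threshold argument, with $S$ replaced by the best constant in $H^2\hookrightarrow L^m$. Finally, Kajikiya's theorem applied to $\tilde\Phi$ yields $k$ pairs of critical points at negative levels, and these are critical points of $\Phi_\lambda$, i.e. $k$ pairs of solutions of \eqref{he2}.
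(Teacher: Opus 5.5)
Your approach cannot work: it looks for critical points of $\Phi_\lambda$ with \emph{negative} energy, and there are none. Under the hypotheses $p\le q<m$ and $q>2$, if $\Phi_\lambda'(u)=0$ then
\[
\Phi_\lambda(u)=\Phi_\lambda(u)-\frac1q\langle\Phi_\lambda'(u),u\rangle=\Big(\frac12-\frac1q\Big)\|u\|^2+\Big(\frac1p-\frac1q\Big)\int|\nabla u|^p+\Big(\frac1q-\frac1m\Big)\int|u|^m .
\]
The right-hand side is strictly positive for $u\neq0$. The $g$-term cancels exactly whatever the sign of $g$; this is \eqref{he1} in the paper. The same computation along a $(\mathrm{PS})$ sequence shows that every $(\mathrm{PS})$ level is $\ge0$.

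Hence no truncation $\tilde\Phi$ can simultaneously be coercive, be negative on a genus-$k$ set, and coincide with $\Phi_\lambda$ near every point where $\tilde\Phi<0$. If it could, Kajikiya's theorem would give negative critical points of $\Phi_\lambda$. Your own lower bound $h(t)=\frac12t^2-\lambda C_1t^q-C_2t^m$ shows why. Since $q>2$, $h$ is positive near $0$ and negative only beyond a barrier, so $0$ is a strict local minimizer and the geometry is of mountain-pass type. The Garc\'{\i}a Azorero--Peral truncation needs a negative well near $0$ followed by a positive hump, which is the concave--convex case $q<2$.

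Two smaller errors. Balancing $\lambda|u|^q$ against $|u|^m$ requires $u=\lambda^{1/(m-q)}v$, not $\lambda^{-1/(m-q)}v$. Also, the combination $\Phi-\frac1m\langle\Phi',u\rangle$ leaves the superquadratic term $-\lambda(\frac1q-\frac1m)\int g|u|^q$, so it does not bound $(\mathrm{PS})$ sequences; you need the coefficient $\frac1q$.

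What is needed is to work at \emph{positive} levels.

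\textbf{Compactness.} One proves $(\mathrm{PS})_c$ for $c\in(0,c^*)$, where $c^*=(\frac12-\frac1m)S^{m/(m-2)}$ does not depend on $\lambda$ (Lemma \ref{h3l}). The weak limit $u$ is a critical point with $\Phi(u)\ge0$. The functional $\mathcal N(u)=\frac1p\int|\nabla u|^p$ is only weakly lower semicontinuous. So the paper handles the $p$-Laplacian by subtracting $\frac1p\langle\Phi'(u_n),u_n\rangle$ from $\Phi(u_n)$, which makes $\mathcal N(u_n)-\mathcal N(u)$ cancel. A plain $(S_+)$ argument does not close, because it would first require $\int|u_n|^{m-2}u_n(u_n-u)\to0$. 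The cancellation yields $c\ge(\frac12-\frac1p)a+(\frac1p-\frac1m)b$ with $b\ge a$, and this is where $p\ge2$ enters.

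\textbf{Minimax.} One then applies a minimax theorem for even functionals with $0$ a strict local minimizer, namely Proposition \ref{hP}, based on the cohomological index. It is applied to $A=E_k\cap\partial B_R$, with $R$ chosen so that $\Phi\le0$ on $A$. Large $\lambda$ is used only to force $\max_{t\in[0,1],\,u\in A}\Phi(tu)<c^*$. This puts all $k$ minimax levels in the range where compactness holds.
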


The paper is organized as follows. In {\textsection}2 we prove some
preliminary lemmas that will be needed for proving our results. In
{\textsection}3 we prove Theorem \ref{t1}. The proofs of Theorems \ref{t2} and
\ref{t3} are similar to that of Theorem \ref{t1} and will be sketched in {\textsection}4. Finally, we prove Theorem \ref{ht5} in \S5.

\section{Some preliminary lemmas}

In this section, following {\cite{MR3656292}} we prove some lemmas, which will
be needed for the proofs of Theorems \ref{t1}, \ref{t2} and \ref{t3}. Among these lemmas, Lemma \ref{9} will also be needed for proving Theorem \ref{ht5}. By
assumption $(V_0)$, there is an equivalent norm $\| \cdot \|$ on $X$ such
that
\[ Q (u) = \frac{1}{2} (\| u^+ \|^2 - \| u^- \|^2)\text{,} \]
as a consequence we can rewrite $\Phi$ as
\begin{equation}
  \Phi (u) = \frac{1}{2} (\| u^+ \|^2 - \| u^- \|^2) + \frac{1}{p} \int |
  \nabla u |^p - \int F (x, u) \text{,} \label{pm}
\end{equation}
here and below for $u \in X$ we write $u = u^+ + u^-$ with $u^{\pm} \in
X^{\pm}$ being $X^{\pm}$ the positive/negative spaces of $Q$. In other words, $u^\pm$ are the orthogonal projections of $u$ in the positive/negative spaces of $Q$.

\begin{lemma}
  \label{8}Under the assumptions $(V_0)$, $(f_{\ast})$ and $(f_0)$, the functional
  $\Phi : X \rightarrow \mathbb{R}$ has a {\emph{local linking}} at $u = 0$.
  That is
  \begin{equation}
    \begin{array}{lll}
      \Phi (u) \leq 0 &  & \text{for $u \in X^- \cap B_r$,}\\
      \Phi (u) > 0 &  & \text{for $u \in (X^+ \backslash \{ 0 \}) \cap B_r$}
    \end{array} \label{e5}
  \end{equation}
  for some $r > 0$, where $B_r = \{ u \in X \mid \| u \| < r \}$.
\end{lemma}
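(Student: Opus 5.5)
The plan is to treat the two halves of \eqref{e5} separately, using $(f_\ast)$ and $(f_0)$ to bound $F$ from both sides. Given $\varepsilon>0$, $(f_0)$ and $(f_\ast)$ yield $C_\varepsilon>0$ such that
\[ |F(x,t)| \le \varepsilon t^2 + C_\varepsilon |t|^q \qquad \text{for all } (x,t)\in\mathbb{R}^N\times\mathbb{R}. \]
Since $X$ embeds continuously into $H^2(\mathbb{R}^N)$, and $H^2$ embeds into $L^2$ and $L^q$ for $q\in(p,2_\ast)\subset[2,2_\ast)$, we get
\[ \Big|\int F(x,u)\Big| \le \varepsilon C\|u\|^2 + C_\varepsilon C\|u\|^q \qquad\text{for all } u\in X, \]
with $C$ independent of $\varepsilon$ and $u$.

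\emph{Positive side.} For $u\in X^+$, \eqref{pm} together with $\int|\nabla u|^p\ge 0$ gives
\[ \Phi(u)\ge \tfrac12\|u\|^2-\varepsilon C\|u\|^2-C_\varepsilon C\|u\|^q. \]
Choose $\varepsilon$ with $\varepsilon C<1/4$. Since $q>2$, the right-hand side is at least $\tfrac18\|u\|^2>0$ for $0<\|u\|<r$ once $r$ is small. This half needs nothing about $p$.

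\emph{Negative side.} By $(V_0)$, $X^-$ is finite dimensional, so all norms on it are equivalent. In particular $\int|\nabla u|^p\le C_1\|u\|^p$ for $u\in X^-$. For $p>2$ this gives, for $u\in X^-$,
\[ \Phi(u)\le -\tfrac12\|u\|^2+\tfrac{C_1}{p}\|u\|^p+\varepsilon C\|u\|^2+C_\varepsilon C\|u\|^q, \]
and every term after the first is $o(\|u\|^2)$ (after fixing $\varepsilon$ small). Hence $\Phi\le0$ on $X^-\cap B_r$ for small $r$. Shrinking $r$ to work for both halves gives \eqref{e5}.

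\emph{Main obstacle: $p=2$.} Here the gradient term is quadratic. By \eqref{0} it is only bounded by a fixed multiple of $\|u\|^2$, not $o(\|u\|^2)$, so the argument above does not close. In this case I would absorb $\frac12\int|\nabla u|^2$ into the quadratic form. That is, I would work with $\tilde Q(u)=\frac12\int(|\Delta u|^2+|\nabla u|^2+V u^2)$, which is exactly the quadratic part of $\Phi$ when $p=2$, and take $X^\pm$ to be its positive and negative spaces. This requires assuming (or checking) that $(V_0)$ holds for $\tilde Q$: nondegenerate with finite-dimensional negative space. Once that is in place, the functional again has the form \eqref{pm} with no extra gradient term, and both halves follow from the estimate on $F$ exactly as above. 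So the only genuine subtlety is making sure the splitting $X=X^+\oplus X^-$ is taken with respect to the full quadratic part of $\Phi$ when $p=2$. For $p>2$ the proof is the routine estimate described above.
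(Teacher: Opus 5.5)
For $p>2$ your argument is the paper's. The same bound $|F(x,t)|\le\varepsilon t^2+C_\varepsilon|t|^q$ gives $\int F(x,u)=o(\|u\|^2)$. The gradient term is $O(\|u\|^p)=o(\|u\|^2)$: the paper uses \eqref{Po} on all of $X$, you use norm equivalence on $X^-$, and either works. On $X^+$ it can simply be discarded since it is nonnegative.

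Your concern about $p=2$ is correct, and it exposes a flaw in the paper's own proof. The paper infers $\int|\nabla u|^p=o(\|u\|^2)$ from \eqref{Po}, but for $p=2$ that only gives $O(\|u\|^2)$.

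Moreover, no argument can close this case with the splitting coming from $Q$, because the lemma as stated is false in general at $p=2$. For $u\in X^-$ one has $\Phi(tu)=t^2\tilde Q(u)+o(t^2)$ as $t\to0$. So the first line of \eqref{e5} forces $\int(|\Delta u|^2+|\nabla u|^2+Vu^2)\le0$ on $X^-$, and $(V_0)$ does not give this. For instance, suppose $\Delta^2+V$ has a single negative eigenvalue $-\delta$ whose eigenfunction $\phi_1$ spans $X^-$. Then $\tilde Q(\phi_1)=\frac12\left(\int|\nabla\phi_1|^2-\delta\int\phi_1^2\right)>0$ as soon as $\delta<\int|\nabla\phi_1|^2/\int\phi_1^2$, and hence $\Phi(t\phi_1)>0$ for small $t>0$. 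Your re-splitting is therefore not a proof of the stated lemma but a corrected version of it, and you should present it that way.

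Also, the hypothesis on $\tilde Q$ can only partly be ``checked'' from $(V_0)$. Since $\tilde Q\ge Q$, its negative index is automatically at most $\ell=\dim X^-$. However, $0$ may be an eigenvalue of $\Delta^2-\Delta+V$ even when $Q$ is nondegenerate, so nondegeneracy of $\tilde Q$ must be added as an assumption. Under it, your argument gives local linking with respect to the $\tilde Q$-splitting. Then \eqref{Cc} holds with $\ell$ replaced by the negative index of $\tilde Q$; this index may be $0$, in which case $0$ is a strict local minimizer.
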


\begin{proof}
  Given $\varepsilon > 0$, by $(f_{\ast})$ and $(f_0)$ there is
  $C_{\varepsilon} > 0$ such that
  \[ | F (x, t) | \leq \varepsilon | t |^2 + C_{\varepsilon} | t |^q
     \text{\qquad  for $(x, t) \in \mathbb{R}^N \times \mathbb{R}$.} \]
  It follows that
  \[ \int F (x, u) = o (\| u \|^2) \text{\qquad as $\| u \| \rightarrow 0$.}
  \]
  For $u \in X$ we have $\nabla u \in H^1$. By the continuous embedding $H^1
  \hookrightarrow L^p$ we get
  \begin{equation}
    \int | \nabla u |^p = | \nabla u |_p^p \leq C_1 \| \nabla u \|_{H^1}^p
    \leq C_2 \| u \|^p \text{.} \label{Po}
  \end{equation}
  Consequently
  \[ \int | \nabla u |^p = o (\| u \|^2) \text{\qquad as $\| u \| \rightarrow
     0$.} \]
  In summary
  \begin{align}
    \Phi (u) & =  \frac{1}{2} (\| u^+ \|^2 - \| u^- \|^2) + \frac{1}{p} \int
    | \nabla u |^p - \int F (x, u) \nonumber\\
    & =  \frac{1}{2} (\| u^+ \|^2 - \| u^- \|^2) + o (\| u \|^2) \label{12} 
  \end{align}
  as $\| u \| \rightarrow 0$. Using (\ref{12}), we deduce (\ref{e5})
  immediately.
\end{proof}

\begin{lemma}
  \label{l2}Under assumptions $(V_0)$, $(f_{\ast})$ and $(f_1)$, any
  $(\tmop{PS})$ sequences of $\Phi : X \rightarrow \mathbb{R}$ are bounded.
\end{lemma}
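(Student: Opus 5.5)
Let $\{u_n\}$ be a $(\tmop{PS})$ sequence, so $\Phi(u_n)\to c$ and $\Phi'(u_n)\to0$ in $X^*$. We argue by contradiction, supposing $\|u_n\|\to\infty$ along a subsequence. The plan combines the Ambrosetti--Rabinowitz estimate from $(f_1)$ with a control of the finite-dimensional negative part $u_n^-$. The indefinite form $Q$ is what prevents the standard argument from closing directly.

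\textbf{Step 1.} Compute
\[
\Phi(u_n)-\frac1\mu\langle\Phi'(u_n),u_n\rangle=\Bigl(\frac12-\frac1\mu\Bigr)\bigl(\|u_n^+\|^2-\|u_n^-\|^2\bigr)+\Bigl(\frac1p-\frac1\mu\Bigr)\int|\nabla u_n|^p+\int\Bigl(\frac1\mu f(x,u_n)u_n-F(x,u_n)\Bigr).
\]
The last integral is nonnegative by $(f_1)$, and the left-hand side is $\le c+1+o(1)\|u_n\|$. Since $\mu>p\ge2$, this gives
\[
\|u_n^+\|^2+\int|\nabla u_n|^p\le C\bigl(1+\|u_n\|+\|u_n^-\|^2\bigr).
\]
Thus everything reduces to showing $\|u_n^-\|^2=o(\|u_n\|^2)$, or at least bounding $\|u_n^-\|$ by a lower power of $\|u_n\|$.

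\textbf{Step 2.} Normalize $v_n=u_n/\|u_n\|$, so $v_n=v_n^++v_n^-$. Suppose first that $\|v_n^-\|\to0$. Then Step 1 gives $\|u_n^+\|^2\le C(1+\|u_n\|)+o(\|u_n\|^2)$, which contradicts $\|u_n\|^2=\|u_n^+\|^2+\|u_n^-\|^2$. Hence, after passing to a subsequence, $v_n^-\to v^-\neq0$ strongly, using that $X^-$ is finite dimensional by $(V_0)$.

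Now use Step 1 again, together with $(f_1)$ in the form $\int F(x,u_n)\ge$ (superquadratic quantity). From $\Phi(u_n)=c+o(1)$,
\[
\int F(x,u_n)+\frac12\|u_n^-\|^2=\frac12\|u_n^+\|^2+\frac1p\int|\nabla u_n|^p-c+o(1)\le C\bigl(1+\|u_n\|+\|u_n^-\|^2\bigr).
\]
Hence $\int F(x,u_n)\le C\|u_n\|^2$. On the other hand, $(f_1)$ gives $F(x,t)\ge c(x)|t|^\mu$ for $|t|\ge1$, with $c(x)>0$ continuous. Since $v_n\rightharpoonup v$ with $v^-\ne0$, we have $v\neq0$: the projection onto the finite-dimensional $X^-$ is weakly continuous. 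By Fatou's lemma, on the set $\{v\ne0\}$ we get $|u_n|\to\infty$, so
\[
\int\frac{F(x,u_n)}{\|u_n\|^2}\ge\int_{\{v\neq 0\}}\frac{F(x,u_n)}{u_n^2}\,v_n^2\to+\infty,
\]
using \eqref{ep}. This contradicts the bound $\int F(x,u_n)\le C\|u_n\|^2$, so $\{u_n\}$ is bounded.

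\textbf{Main obstacle.} The delicate point is the bookkeeping in Step 2. The inequality from Step 1 is used twice: once to force $v^-\ne0$, and once to obtain the quadratic upper bound on $\int F$. The lower bound \eqref{ep} also has to be applied only on the set where $v\ne0$, to obtain the divergence via Fatou. The $p$-Laplacian term causes no trouble here, since it is nonnegative and has a favorable coefficient $\frac1p-\frac1\mu>0$.
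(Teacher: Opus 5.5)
Your proof is correct and follows essentially the paper's argument: normalize $v_n=u_n/\|u_n\|$, use $\Phi(u_n)-\frac1\mu\langle\nabla\Phi(u_n),u_n\rangle$ to rule out the case where the finite-dimensional negative part of the limit vanishes, and otherwise derive a contradiction from Fatou's lemma and \eqref{ep} on $\{v\neq0\}$. The differences are cosmetic: the paper splits on $v=0$ versus $v\neq0$ and, in the second case, divides by $\|u_n\|^p$ using the bound \eqref{Po} on $\int|\nabla u_n|^p$, whereas you split on $v^-$ and use your Step 1 inequality to get the quadratic bound $\int F(x,u_n)\le C\|u_n\|^2$.
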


\begin{proof}
  Let $\{ u_n \} \subset X$ be a $(\tmop{PS})$ sequence of $\Phi$, that is
  $\Phi (u_n) \rightarrow c$ for some $c \in \mathbb{R}$ and $\nabla\Phi (u_n)
  \rightarrow 0$. If $\{ u_n \}$ is unbounded we may assume $\| u_n \|
  \rightarrow \infty$. Set $v_n = \| u_n \|^{- 1} u_n$. Then
  \[ v_n = v_n^+ + v_n^- \rightharpoonup v = v^+ + v^- \text{\quad in $X$,
     \qquad$v_n \rightarrow v$\quad a.e.\ on $\mathbb{R}^N$} \]
  being $v_n^{\pm}, v^{\pm} \in X^{\pm}$. We will derive contradiction for
  both cases $v = 0$ and $v \neq 0$, this will prove the lemma.
  
  If $v = 0$, then $v_n^- \rightarrow v^- = 0$ because $\dim X^- < \infty$.
  For $n \gg 1$ we have
  \[ \| v_n^+ \|^2 - \| v_n^- \|^2 \geq \frac{1}{2} \]
  because $\| v_n^+ \|^2 + \| v_n^- \|^2 = 1$. Using $(f_1)$ we have
  \begin{align*}
    c + 1 + \| u_n \| & \geq  \Phi (u_n) - \frac{1}{\mu} \langle \nabla\Phi (u_n),
    u_n \rangle\\
    & =  \left( \frac{1}{2} - \frac{1}{\mu} \right) (\| u_n^+ \|^2 - \|
    u_n^- \|^2) + \left( \frac{1}{p} - \frac{1}{\mu} \right) \int | \nabla u_n
    |^p\\
    &   \qquad - \int \left( F (x, u_n) - \frac{1}{\mu} f (x, u_n) u_n
    \right)\\
    & \geq  \left( \frac{1}{2} - \frac{1}{\mu} \right) \| u_n \|^2 (\| v_n^+
    \|^2 - \| v_n^- \|^2) \geq \left( \frac{1}{4} - \frac{1}{2 \mu} \right) \|
    u_n \|^2 \text{,}
  \end{align*}
  contradicting $\| u_n \| \rightarrow \infty$.
  
  If $v \neq 0$, then $\Theta = \{ v \neq 0 \}$ has positive Lebesgue measure.
  For $x \in \Theta$ we have $| u_n (x) | \rightarrow \infty$ and
  \[ \frac{F (x, u_n (x))}{| u_n (x) |^p} | v_n (x) |^p \rightarrow + \infty
  \]
  because of (\ref{ep}), which follows from $(f_1)$. Applying Fatou lemma we get
  \begin{equation}
    \frac{1}{\| u_n \|^p} \int F (x, u_n) \geq \int_{\Theta} \frac{F (x,
    u_n)}{| u_n |^p} | v_n |^p \rightarrow + \infty \text{.} \label{cr}
  \end{equation}
  However, for $n \gg 1$ using (\ref{Po}) we see that
  \begin{align*}
    \frac{1}{\| u_n \|^p} \int F (x, u_n) & =  \frac{1}{\| u_n \|^p} \left( Q
    (u_n) + \frac{1}{p} \int | \nabla u_n |^p - \Phi (u_n) \right)\\
    & \leq  1 + \frac{C_2}{p}
  \end{align*}
  is bounded above, contradicting (\ref{cr}).
\end{proof}

To study the convergence of $(\tmop{PS})$ sequences, we need to study the
$C^1$-func\-t\-i\-o\-nal $\mathcal{N}: X \rightarrow \mathbb{R}$,
\[ \mathcal{N} (u) = \frac{1}{p} \int | \nabla u |^p \text{.} \]
\begin{lemma}
  \label{9}The functional $\mathcal{N}$ is weakly lower semi-continuous, its
  gradient $\nabla \mathcal{N}: X \rightarrow X$ is weakly continuous. That
  is, $u_n \rightharpoonup u$ in $X$ implies
  \[ \mathcal{N} (u) \leq\varliminf_{n \rightarrow \infty} \mathcal{N} (u_n)
     \text{, \qquad} \langle \nabla \mathcal{N}(u), \phi \rangle = \lim_{n
     \rightarrow \infty} \langle \nabla \mathcal{N}(u_n), \phi \rangle \]
  for all $\phi \in X$; where $\langle \cdot, \cdot \rangle$ is the inner
  product on $X$.
\end{lemma}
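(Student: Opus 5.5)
Both claims come from convexity of $\mathcal{N}$, together with pointwise a.e.\ convergence of $\nabla u_n$, which we get from local compactness.

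\emph{Weak lower semicontinuity.} The functional $\mathcal{N}$ is convex on $X$, since $\xi\mapsto\frac1p|\xi|^p$ is convex on $\mathbb{R}^N$ and $u\mapsto\nabla u$ is linear. It is also continuous on $X$ by (\ref{Po}) and the continuity of the Nemytskii map $L^p\to\mathbb{R}$, $w\mapsto |w|_p^p$; indeed it is $C^1$. A convex continuous functional is weakly lower semicontinuous, because its sublevel sets are closed and convex, hence weakly closed (Mazur). This gives $\mathcal{N}(u)\le\varliminf\mathcal{N}(u_n)$. Equivalently, one can use the gradient inequality $\mathcal{N}(u_n)\ge\mathcal{N}(u)+\langle\nabla\mathcal{N}(u),u_n-u\rangle$ and pass to the limit.

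\emph{Weak continuity of $\nabla\mathcal{N}$.} For $\phi\in X$ we have
\[ \langle\nabla\mathcal{N}(u),\phi\rangle=\int|\nabla u|^{p-2}\nabla u\cdot\nabla\phi . \]
Let $u_n\rightharpoonup u$ in $X$. Then $\{u_n\}$ is bounded in $X$, hence bounded in $H^2$ by (\ref{0}). So $\{\nabla u_n\}$ is bounded in $H^1(\mathbb{R}^N)^N$, and by (\ref{Po}) it is bounded in $L^p$. On each ball $B_R$ the Rellich theorem gives $\nabla u_n\to\nabla u$ in $L^2(B_R)$, because the weak limit in $H^1$ is identified as $\nabla u$. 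A diagonal argument then yields a subsequence with $\nabla u_n\to\nabla u$ a.e.\ on $\mathbb{R}^N$.

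Set $w_n=|\nabla u_n|^{p-2}\nabla u_n$. This sequence is bounded in $L^{p'}$, since $\int|w_n|^{p'}=\int|\nabla u_n|^p\le C$, and $w_n\to|\nabla u|^{p-2}\nabla u$ a.e. A bounded sequence in the reflexive space $L^{p'}$ that converges a.e.\ converges weakly in $L^{p'}$ to its a.e.\ limit; this is the standard Brezis--Lieb/Vitali type fact, proved by testing against $C_c^\infty$ functions and using density. Since $\nabla\phi\in L^p$ by (\ref{Po}), we get
\[ \int w_n\cdot\nabla\phi\to\int|\nabla u|^{p-2}\nabla u\cdot\nabla\phi . \]
The limit does not depend on the subsequence chosen, so the whole sequence converges.

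The main obstacle is the lack of compactness on $\mathbb{R}^N$: there is no strong convergence of $\nabla u_n$ globally. The argument avoids needing it by combining local Rellich compactness with the weak-$L^{p'}$ limit identification. The one point that needs care is that $X\hookrightarrow H^2$ is continuous, so weak convergence in $X$ implies weak convergence in $H^2$; this is what identifies the local limits as $\nabla u$.
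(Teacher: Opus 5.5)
Your proof is correct, but you reach the lower semicontinuity by a different route from the paper. The paper does not use convexity. From $u_n\to u$ in $H^1_{\mathrm{loc}}$ it extracts a subsequence with $\nabla u_n\to\nabla u$ a.e.\ and applies Fatou's lemma, so both halves of the lemma rest on the same a.e.\ convergence of gradients. Your argument is instead purely functional-analytic: a convex continuous functional is weakly l.s.c.\ by Mazur, or equivalently you can use the gradient inequality $\mathcal{N}(u_n)\ge\mathcal{N}(u)+\langle\nabla\mathcal{N}(u),u_n-u\rangle$. This needs no local compactness. It also avoids the subsequence bookkeeping that the Fatou route leaves implicit, where one must first pass to a subsequence realizing the $\varliminf$ before extracting an a.e.\ convergent one.

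For the weak continuity of $\nabla\mathcal{N}$ you follow the paper's mechanism: Rellich on balls, a diagonal a.e.\ subsequence, and the Brezis--Lieb weak-limit remark. The difference is that you use the natural pairing $L^{p'}$--$L^{p}$. With $w_n=|\nabla u_n|^{p-2}\nabla u_n$ you have $\int|w_n|^{p'}=\int|\nabla u_n|^p$, and $\nabla\phi\in L^p$ by (\ref{Po}). The paper instead picks $q=2$ or $q=2^{\ast}$ according to whether $p\le\frac{2N-2}{N-2}$, so that $q'(p-1)\in[2,2^{\ast}]$, and pairs $L^{q'}$ with $L^{q}$. Both choices work in exactly the range of $p$ where (\ref{Po}) holds, so yours is simply the more economical one. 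Your closing subsequence-of-subsequences remark also properly justifies convergence of the whole sequence, which the paper asserts after passing to a subsequence.
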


\begin{proof}
  Suppose $u_n \rightharpoonup u$ in $X$. Then $u_n \rightarrow u$ in
  $H^1_{\mathrm{loc}}$, so $\nabla u_n \rightarrow \nabla u$ in
  $L^2_{\mathrm{loc}}$ and consequently, up to a subsequence $\nabla u_n \rightarrow \nabla u$ a.e.\
  on $\mathbb{R}^N$. It follows from Fatou's Lemma that
  \begin{align*}
    \mathcal{N} (u) & =  \frac{1}{p} \int \varliminf_{n \rightarrow \infty} |
    \nabla u_n |^p\\
    & \leq  \frac{1}{p} \varliminf_{n \rightarrow \infty} \int | \nabla u_n |^p
    = \varliminf_{n \rightarrow \infty} \mathcal{N} (u_n) \text{.}
  \end{align*}
  For $N = 2$, set $q = 2$; for $N \geq 3$ set
  \[ q = \Bigg\{ \begin{array}{lll}
       2 &  & \text{if $2 \leq p \leq \frac{2 N - 2}{N - 2}$,}\\
       2^{\ast} &  & \text{if $\frac{2 N - 2}{N - 2} < p \leq 2^{\ast}
       \text{.}$}
     \end{array}  \]
  Then it is easy to check that $q' (p - 1) \in [2, 2^{\ast}]$ being $q' = q /
  (q - 1)$. Since $\{ u_n \}$ is bounded in $X$, $\{ \nabla u_n \}$ is bounded
  in $H^1$, hence in $L^{q' (p - 1)}$. Therefore
  \[ \sup_n \left| \int \big| | \nabla u_n |^{p - 2} \nabla u_n \big|^{q'} \right| =
     \sup_n \int | \nabla u_n |^{q' (p - 1)} < \infty \text{,} \]
  so $\{ | \nabla u_n |^{p - 2} \nabla u_n \}$ is bounded in $L^{q'}$. Noting
  $\nabla u_n \rightarrow \nabla u$ a.e.\ on $\mathbb{R}^N$, by a well-known remark of
  Br{\'e}zis \& Lieb {\cite[pp.\ 487]{MR699419}} (see \cite[Lem 1.4.8]{MR1276944} for a proof), we deduce
  \[ | \nabla u_n |^{p - 2} \nabla u_n \rightharpoonup | \nabla u |^{p - 2}
     \nabla u \text{\qquad in $L^{q'}$.} \]
  Therefore, since $\nabla \phi \in L^q$ for $\phi \in X$, we conclude
  \begin{align*}
    \lim_{n \rightarrow \infty} \langle \nabla \mathcal{N}(u_n), \phi \rangle
    & =  \lim_{n \rightarrow \infty} \int | \nabla u_n |^{p - 2} \nabla u_n
    \cdot \nabla \phi\\
    & =  \int | \nabla u |^{p - 2} \nabla u \cdot \nabla \phi = \langle
    \nabla \mathcal{N}(u), \phi \rangle \text{.}\qedhere
  \end{align*}
\end{proof}

\begin{lemma}
  \label{l4}Under the assumptions of Theorem \ref{t1}, $\Phi : X \rightarrow
  \mathbb{R}$ satisfies the $(\tmop{PS})$ condition.
\end{lemma}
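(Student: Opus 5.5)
Let $\{u_n\}\subset X$ be a $(\tmop{PS})$ sequence of $\Phi$, so $\Phi(u_n)\to c$ and $\nabla\Phi(u_n)\to 0$. By Lemma \ref{l2} it is bounded, so after passing to a subsequence $u_n\rightharpoonup u$ in $X$, $u_n\to u$ in $L^2_{\mathrm{loc}}$ and $u_n\to u$ a.e. Write $\Phi$ in the form (\ref{pm}). Since $\dim X^-<\infty$, we get $u_n^-\to u^-$ strongly. The task therefore reduces to proving $\|u_n^+-u^+\|\to 0$.

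\emph{Step 1.} We test $\nabla\Phi(u_n)-\nabla\Phi(u)$ against $u_n-u$. Since $\{u_n-u\}$ is bounded, $\langle\nabla\Phi(u_n),u_n-u\rangle\to 0$. By weak convergence, $\langle\nabla\Phi(u),u_n-u\rangle\to0$. Expanding the difference gives
\[ o(1)=\|u_n^+-u^+\|^2-\|u_n^--u^-\|^2+\langle\nabla\mathcal N(u_n)-\nabla\mathcal N(u),u_n-u\rangle-\int (f(x,u_n)-f(x,u))(u_n-u). \]
The term $\|u_n^--u^-\|^2$ tends to $0$. The $p$-Laplacian term is nonnegative, because $\nabla\mathcal N$ is monotone: $(|a|^{p-2}a-|b|^{p-2}b)\cdot(a-b)\ge0$ pointwise. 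We may therefore drop it and obtain
\[ \|u_n^+-u^+\|^2\le o(1)+\int f(x,u_n)(u_n-u)-\int f(x,u)(u_n-u). \]
The map $\phi\mapsto\int f(x,u)\phi$ is a bounded linear functional on $X$ by $(f_\ast)$ and the embedding $X\hookrightarrow L^2\cap L^q$, so the last integral tends to $0$. Thus everything reduces to (\ref{Wi}). Lemma \ref{9} is not even needed here, since monotonicity suffices.

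\emph{Step 2, the main obstacle: proving (\ref{Wi}) from $(f_2)$.} The plan is to split $\mathbb{R}^N$ into $B_R$, the set $\{|x|\ge R,\ |u_n|\le r\}$, and the set $\{|x|\ge R,\ |u_n|>r\}$, and treat each piece separately.

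On $B_R$ we use the compact embedding $X\hookrightarrow H^2(B_R)\hookrightarrow L^s(B_R)$ for $s<2_\ast$. Together with $(f_\ast)$, this gives $\int_{B_R} f(x,u_n)(u_n-u)\to0$.

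Outside $B_R$, on $\{|u_n|>r\}$ with $r$ large, we use $|f(x,t)|\le C|t|^{q-1}$ and the bound $|t|\le r^{2-q}|t|^{q-1}$ to control the contribution. A cleaner route uses $(f_0)$ together with $(f_\ast)$: given $\varepsilon$, choose $\delta$ and $r$ so that $|f(x,t)|\le\varepsilon|t|+\varepsilon|t|^{q-1}$ whenever $|t|<\delta$, and $|f(x,t)|\le \varepsilon |t|^{q_0-1}+C|t|^{q-1}$ for a suitable intermediate exponent. The region $|t|\in[\delta,r]$ is then handled by $(f_2)$: for $|x|\ge R$ we have $|f(x,t)|\le\varepsilon|t|$. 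This leaves the large-$|t|$ region, where the bound $C|t|^{q-1}$ is not small. To handle it, first pick $q'\in(2,q)$ close to $q$ and use $(f_\ast)$ in the form $|f(x,t)|\le \varepsilon|t|^{q-1}$ for $|t|\ge r$ only if $f$ is dominated by a smaller power. Since it is not, I would instead use Hölder together with a measure estimate: the set $\{|u_n|>r\}$ has measure at most $r^{-2}\sup_n|u_n|_2^2$, which is small. Hence $\int_{|u_n|>r}|u_n|^{q-1}|u_n-u|\le |u_n|_{s}^{q-1}|u_n-u|_s\,\mathfrak m(\{|u_n|>r\})^{\theta}$ for some $s\in(q,2_\ast)$ and $\theta>0$, which is uniformly small. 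The same smallness argument disposes of the region where $|u_n|\le\delta$ or $|u|$ is small, since $\int|u_n||u_n-u|$ there is at most $\varepsilon$ times a bounded constant.

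Combining the three pieces gives $\varlimsup\int f(x,u_n)(u_n-u)\le C\varepsilon$ for every $\varepsilon>0$, which is (\ref{Wi}). Then $u_n^+\to u^+$, hence $u_n\to u$ in $X$, and $\Phi$ satisfies $(\tmop{PS})$.
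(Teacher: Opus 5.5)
Your proof is correct, but it handles the $p$-Laplacian term differently from the paper. The paper tests only $\nabla\Phi(u_n)$ against $u_n-u$ and expands in terms of $\|u_n^\pm\|^2$ and $\langle u_n^\pm,u^\pm\rangle$. It then uses Lemma~\ref{9}: weak continuity of $\nabla\mathcal N$ gives $\langle\nabla\mathcal N(u_n),u\rangle\to\langle\nabla\mathcal N(u),u\rangle=p\mathcal N(u)$, and weak lower semicontinuity gives $\varliminf p\mathcal N(u_n)\ge p\mathcal N(u)$. Together with (\ref{W}) this yields $\varlimsup\|u_n^+\|^2\le\|u^+\|^2$, hence norm convergence and so strong convergence.

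You instead subtract $\langle\nabla\Phi(u),u_n-u\rangle\to0$ and discard $\langle\nabla\mathcal N(u_n)-\nabla\mathcal N(u),u_n-u\rangle\ge0$ using pointwise monotonicity of $a\mapsto|a|^{p-2}a$. This is more elementary: it needs no a.e.\ convergence of gradients and no Br\'ezis--Lieb weak-limit remark, and it gives $\|u_n^+-u^+\|\to0$ directly. The only extra check is that $\phi\mapsto\int f(x,u)\phi$ is bounded on $X$, which you supply. The paper's route pays off elsewhere, since Lemma~\ref{9} is reused in \S5. There it passes to the limit in $\langle\nabla\mathcal N(u_n),v\rangle$ to show that the weak limit of a $(\tmop{PS})_c$ sequence is a critical point, which monotonicity alone does not give.

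For (\ref{Wi}) the paper only refers to the literature. Your three-region estimate is a complete proof and even gives $\int f(x,u_n)(u_n-u)\to0$: Rellich on $B_R$; $(f_2)$ on $\{|x|\ge R,\ |u_n|\le r\}$; Chebyshev plus H\"older with $s\in(q,2_\ast]$ on $\{|u_n|>r\}$. In a write-up, delete the false start about domination by smaller powers. Also drop the detour through $(f_0)$, since $(f_2)$ already gives $|f(x,t)|\le\varepsilon|t|$ for all $|t|\le r$ once $|x|\ge R$. Finally, make the order of choices explicit: first $r$ (the tail bound $Cr^{-2(1-q/s)}$ is uniform in $n$ and $R$), then $R=R(r,\varepsilon)$, then $n\to\infty$.
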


\begin{proof}
  Let $\{ u_n \} \subset X$ be a $(\tmop{PS})$ sequence of $\Phi$. By Lemma
  \ref{l2}, $\{ u_n \}$ is bounded. Up to a subsequence $u_n \rightharpoonup
  u$ in $X$ for some $u \in X$, and $\| u_n^- \| \rightarrow \| u^- \|$
  because $\dim X^- < \infty$.
  
  It suffices to show that $\| u_n \| \rightarrow \| u \|$ (then $u_n\to u$ follows because $X$ is a Hilbert space). Noting
  \[ \langle \nabla \mathcal{N} (u_n), u_n - u \rangle = p\mathcal{N} (u_n) -
     \langle \nabla \mathcal{N} (u_n), u \rangle \text{,} \]
  by direct computation we deduce
  \begin{align}
    o (1) & =  \langle \nabla\Phi (u_n), u_n - u \rangle \nonumber\\
    & =  (\| u_n^+ \|^2 - \| u_n^- \|^2) - (\langle u_n^+, u^+ \rangle -
    \langle u_n^-, u^- \rangle) \nonumber\\
    &   \qquad\qquad + p\mathcal{N} (u_n) - \langle \nabla \mathcal{N} (u_n), u
    \rangle - \int f (x, u_n) (u_n - u) \nonumber\\
    & =  (\| u_n^+ \|^2 - \| u_n^- \|^2) - (\| u^+ \|^2 - \| u^- \|^2) + o
    (1) \nonumber\\
    &   \qquad\qquad + p\mathcal{N} (u_n) - \langle \nabla \mathcal{N} (u_n), u
    \rangle - \int f (x, u_n) (u_n - u) \text{.} \label{7} 
  \end{align}
  Similar to {\cite[p. 29]{MR2038142}}, using $(f_2)$ we can prove
  \begin{equation}
    \varlimsup_{n \rightarrow \infty} \int f (x, u_n) (u_n - u) \leq 0 \text{.}
    \label{W}
  \end{equation}
  Thus, using Lemma \ref{9} and $\| u_n^- \| \rightarrow \| u^- \|$, we
  deduce from (\ref{7}) and (\ref{W}) that
  \begin{align*}
    \| u^+ \|^2 & \leq  \varliminf_{n \rightarrow \infty} \| u_n^+ \|^2 \leq
    \varlimsup_{n \rightarrow \infty} \| u_n^+ \|^2\\
    & =  \| u^+ \|^2 + \varlimsup_{n \rightarrow \infty} \left( \langle \nabla
    \mathcal{N} (u_n), u \rangle - p\mathcal{N} (u_n) + \int f (x, u_n) (u_n -
    u) \right)\\
    & \leq  \| u^+ \|^2+ \langle \nabla
    \mathcal{N} (u), u \rangle - p\mathcal{N} (u)=\| u^+ \|^2 \text{.}
  \end{align*}
  It follows that $\| u_n^+ \| \rightarrow \| u^+ \|$. Noting $\| u_n^- \|
  \rightarrow \| u^- \|$ we get $\| u_n \| \rightarrow \| u \|$.
\end{proof}

\begin{lemma}
  \label{4}Under the assumptions $(V_0)$, $(f_{\ast})$ and $(f_0)$, there is $A >
  0$ such that if $\Phi (u) \leq - A$ then
  \begin{align} \left. \frac{\mathrm{d}}{\mathrm{d} t} \right|_{t = 1} \Phi (t u) < 0 \text{.} \label{Er}
  \end{align}
\end{lemma}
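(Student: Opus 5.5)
The plan is to compute the derivative explicitly and then use the Ambrosetti--Rabinowitz inequality $(f_1)$ to trade the nonlinear term for $\Phi(u)$ itself. I expect $(f_1)$ to be needed even though the statement lists only $(f_{\ast})$ and $(f_0)$; it is in force in all the theorems where the lemma is used.

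By (\ref{pm}),
\[ \left.\frac{\mathrm{d}}{\mathrm{d}t}\right|_{t=1}\Phi(tu)=\langle\nabla\Phi(u),u\rangle=\|u^+\|^2-\|u^-\|^2+p\mathcal{N}(u)-\int f(x,u)u . \]
By $(f_1)$ we have $\int f(x,u)u\ge\mu\int F(x,u)$. The hypothesis $\Phi(u)\le -A$ gives $\int F(x,u)\ge Q(u)+\mathcal{N}(u)+A$. Substituting,
\[ \langle\nabla\Phi(u),u\rangle\le \frac{\mu-2}{2}\bigl(\|u^-\|^2-\|u^+\|^2\bigr)-(\mu-p)\mathcal{N}(u)-\mu A . \]
Write $s=\|u^-\|$, $a=(\mu-2)/2$ and $b=\mu-p>0$. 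It then suffices to show that $a s^2-a\|u^+\|^2-b\mathcal{N}(u)$ is bounded above by a constant $M$ independent of $u$, and to take $A>M/\mu$.

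To bound this expression I split into cases according to the size of $\|u^+\|$ relative to $s$.

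\emph{Large $u^+$.} If $\|u^+\|\ge s$, the quadratic part is $\le 0$ and we are done.

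\emph{Small $u^+$.} Here I use that $X^-$ is finite dimensional. The map $w\mapsto|\nabla w|_p$ is a norm on $X^-$, since $\nabla w=0$ forces $w$ constant, hence $w=0$ in $L^2$. So $\|w\|\le C_0|\nabla w|_p$ on $X^-$. Together with (\ref{Po}), $|\nabla u^+|_p\le C_2^{1/p}\|u^+\|$. Fix $\varepsilon>0$ with $C_2^{1/p}\varepsilon\le 1/(2C_0)$. If $\|u^+\|\le\varepsilon s$, then $|\nabla u|_p\ge s/(2C_0)$, so $\mathcal{N}(u)\ge c\,s^p$. When $p>2$ this gives $a s^2-b c s^p\le M$, as desired.

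\emph{Intermediate range and $p=2$.} The main obstacle is the range $\varepsilon s<\|u^+\|<s$, and also the case $p=2$. In both, the remaining expression is homogeneous of degree two and need not be negative by itself. For these I would also use the hypothesis $\Phi(u)\le-A$ a second time, rather than only through $\int F$. The aim is to show that $\Phi(u)\le -A$ with $A$ large forces $u$ into a region where either $\|u^+\|\ge s$ or $\|u^+\|\le\varepsilon s$.

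One way to do this is through a different convex combination, $\langle\nabla\Phi(u),u\rangle-\theta\Phi(u)$ with $\theta\in(2,\mu)$. Since $\mu>\theta$, this leaves a positive multiple of $\int F(x,u)$ that must be absorbed. I would lower-bound it on the relevant cone using (\ref{ep}) and the finite dimensionality of $X^-$, in the spirit of the $v\neq0$ case of Lemma \ref{l2}.

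If this does not close, the fallback is to modify the deformation argument where the lemma is used. The purpose of (\ref{Er}) is presumably to show that the sublevel set $\Phi^{-A}$ is a deformation retract of a sphere-like set, for computing critical groups at infinity. Only the weaker statement ``$\Phi(tu)\to-\infty$ strictly decreasingly for large $t$'' is needed there, and that follows from (\ref{ep}) on the finite-dimensional directions.
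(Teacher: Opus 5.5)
Your reduction is right: combining $(f_1)$ with $\Phi(u)\le -A$ gives exactly the inequality the paper uses, namely $\langle\nabla\Phi(u),u\rangle\le\frac{\mu-2}{2}(\|u^-\|^2-\|u^+\|^2)-(\mu-p)\mathcal{N}(u)-\mu A$. You are also right that $(f_1)$ (through \eqref{ep}) is needed although it is not listed. But the proof stops exactly where the content of the lemma lies. Your plan to bound $\frac{\mu-2}{2}(\|u^-\|^2-\|u^+\|^2)-(\mu-p)\mathcal{N}(u)$ above by a constant cannot work in general. For $p=2$ this expression equals $-\frac{\mu-2}{2}\int(|\Delta u|^2+V(x)u^2+|\nabla u|^2)$. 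It is $2$-homogeneous, so it is bounded above only if it is $\le 0$ on all of $X$, i.e.\ only if $\Delta^2-\Delta+V$ has no negative direction, and $(V_0)$ allows $V_-$ large enough to violate this. On the cone $\{\|u^+\|<\|u^-\|\}$ outside your small-$u^+$ region (and on all of it when $p=2$) you only list tentative options without executing any. The fallback does not prove the lemma, and its premise is wrong. The retraction in the proof of Theorem \ref{t1} needs, for \emph{every} $w$ on the unit sphere of the infinite-dimensional space $X$, a \emph{unique} $t_w$ with $\Phi(t_w w)=-A$, depending continuously on $w$. That uniqueness and the implicit-function continuity are exactly what \eqref{Er} supplies. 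Eventual monotonicity along each ray, with a $w$-dependent threshold, does not give them.

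The missing idea is to argue by contradiction and let superlinearity act on $\langle\nabla\Phi(u),u\rangle$ directly, instead of bounding a quadratic form. Suppose $\Phi(u_n)\le -n$ and $\langle\nabla\Phi(u_n),u_n\rangle\ge 0$. Your inequality gives $\|u_n^-\|^2-\|u_n^+\|^2\ge 2\mu n/(\mu-2)$, so $\|u_n\|\to\infty$ and $v_n=u_n/\|u_n\|$ satisfies $\|v_n^-\|^2>1/2$. Since $\dim X^-<\infty$, $v_n^-\to v^-$ strongly with $\|v^-\|^2\ge 1/2$, so the weak limit $v$ of $v_n$ is nonzero. Then \eqref{ep}, $F\ge 0$ and Fatou, exactly as in \eqref{cr}, give $\|u_n\|^{-p}\int f(x,u_n)u_n\ge\mu\|u_n\|^{-p}\int F(x,u_n)\to+\infty$. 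Meanwhile $\|u_n^+\|^2-\|u_n^-\|^2+\int|\nabla u_n|^p=O(\|u_n\|^p)$ by \eqref{Po} and $p\ge 2$. Hence $\langle\nabla\Phi(u_n),u_n\rangle\to-\infty$, a contradiction. This handles $p=2$ and $p>2$ uniformly and makes your coercivity of $|\nabla\cdot|_p$ on $X^-$ unnecessary. The sequential form is essential: Fatou only yields superlinear growth of $\int F$ along sequences whose normalized weak limit is nonzero, and finite dimensionality of $X^-$ guarantees this on the whole cone $\{\|u^+\|<\|u^-\|\}$.
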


\begin{proof}
  If the claim is not true, there exists a sequence $\{ u_n \} \subset X$ such
  that $\Phi (u_n) \leq - n$ but
  \begin{equation}
    \langle \nabla\Phi (u_n), u_n \rangle = \left. \frac{\mathrm{d}}{\mathrm{d} t}
    \right|_{t = 1} \Phi (t u_n) \geq 0 \text{.} \label{pp}
  \end{equation}
  It follows from $(f_1)$ that
  \begin{align}
    \left( \frac{\mu}{2} - 1 \right) (\| u_n^+ \|^2 - \| u_n^- \|^2) & \leq 
    \frac{\mu - 2}{2} (\| u_n^+ \|^2 - \| u_n^- \|^2) + \frac{\mu - p}{p} \int
    | \nabla u_n |^p \nonumber\\
    &   \hspace{7em} + \int (f (x, u_n) u_n - \mu F (x, u_n)) \nonumber\\
    & =  \mu \Phi (u_n) - \langle \nabla\Phi (u_n), u_n \rangle \leq - \mu n
    \text{.} \label{5} 
  \end{align}
  Let $v_n = \| u_n \|^{- 1} u_n$. Then since $\dim X^- < \infty$ we have
  \[ v_n \rightharpoonup v \text{\quad in $X$, \qquad$v_n^- \rightarrow
     v^-$\quad in $X^-$.} \]
  If $v^- = 0$, From $\| v_n^+ \|^2 + \| v_n^- \|^2 = 1$ we deduce $\| v_n^+
  \| \rightarrow 1$. Hence
  \[ \| u_n^+ \|^2 = \| u_n \|^2 \| v_n^+ \|^2 \geq \| u_n \|^2 \| v_n^- \|^2
     = \| u_n^- \|^2 \]
  for $n\gg1$, contradicting (\ref{5}).
  
  Thus, we must have $v^- \neq 0$. So $v \neq 0$ as well. Similar to
  (\ref{cr}) we get
  \[ \frac{1}{\| u_n \|^p} \int f (x, u_n) u_n \geq \frac{\mu}{\| u_n \|^p}
     \int F (x, u_n) \rightarrow + \infty \text{.} \]
  From this, (\ref{pp}) and (\ref{Po}), we get a contradiction
  \begin{align*}
    0 & \leq  \frac{\langle \nabla\Phi (u_n), u_n \rangle}{\| u_n \|^p}\\
    & =  \frac{1}{\| u_n \|^p} \left( Q (u_n) + \int | \nabla u_n |^p - \int
    f (x, u_n) u_n \right) \rightarrow - \infty \text{.}\qedhere
  \end{align*}
\end{proof}

\begin{lemma}
  \label{10}Assuming $(f_{\ast})$ and \eqref{ep}, $\Phi : X \rightarrow
  \mathbb{R}$ is anti-coercive on any finite dimensional subspace of $X$.
\end{lemma}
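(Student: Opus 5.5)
We argue by contradiction, using the same normalization device as in Lemma \ref{l2} and Lemma \ref{4}. Let $E\subset X$ be a finite dimensional subspace. Suppose $\Phi$ is not anti-coercive on $E$. Then there are $\{u_n\}\subset E$ and $M\in\mathbb{R}$ with $\|u_n\|\to\infty$ and $\Phi(u_n)\ge -M$.

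Set $v_n=\|u_n\|^{-1}u_n$. Because $\dim E<\infty$, a subsequence satisfies $v_n\to v$ in $E$ with $\|v\|=1$. The norm is equivalent to the $H^2$ and $L^2$ norms on $E$ (any two norms on a finite dimensional space are equivalent), so $v_n\to v$ in $L^2$. Passing to a further subsequence, $v_n\to v$ a.e. Hence $\Theta=\{v\ne0\}$ has positive measure, and $|u_n(x)|=\|u_n\|\,|v_n(x)|\to\infty$ for $x\in\Theta$.

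Next we bound the positive part of $\Phi$. By (\ref{Po}),
\[ \frac{1}{\|u_n\|^p}\Big(Q(u_n)+\frac1p\int|\nabla u_n|^p\Big)\le \frac{\|u_n\|^2}{2\|u_n\|^p}+\frac{C_2}{p}, \]
and the right side stays bounded since $p\ge2$. Together with $\Phi(u_n)\ge -M$ this gives
\[ \frac{1}{\|u_n\|^p}\int F(x,u_n)\le C \]
for all large $n$.

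To contradict this we need a lower bound on $F$. The hypotheses are only $(f_{\ast})$ and (\ref{ep}), with no sign condition, so we first derive one. From (\ref{ep}), for a.e. $x$ there is $T(x)$ with $F(x,t)\ge|t|^p$ when $|t|\ge T(x)$. From $(f_{\ast})$, $|F(x,t)|\le C(t^2+|t|^q)$, so on $|t|\le T(x)$ we have $F(x,t)\ge -C_x t^2$. To keep the constants uniform, we work instead with the pointwise quantity $F(x,u_n)/\|u_n\|^p$ and split the integral over $\Theta$ and $\mathbb{R}^N\setminus\Theta$.

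On $\Theta$ we argue as in (\ref{cr}). For $x\in\Theta$,
\[ \frac{F(x,u_n)}{|u_n|^p}\,|v_n|^p\to+\infty, \]
and this expression is eventually nonnegative there. Fatou's lemma then shows the integral over $\Theta$ tends to $+\infty$.

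On the complement the main obstacle is to bound $F(x,u_n)/\|u_n\|^p$ below uniformly without a sign on $F$. Our first attempt uses $(f_{\ast})$, which gives $F(x,t)\ge -C(t^2+|t|^q)$. Dividing by $\|u_n\|^p$ yields
\[ -C\big(\|u_n\|^{2-p}|v_n|^2+\|u_n\|^{q-p}|v_n|^q\big). \]
The second term does not obviously vanish on the complement of $\Theta$. If this route fails, we fall back on the positivity $F\ge0$ from $(f_1)$, which holds in all the settings where the lemma is applied. Then the integral over the complement of $\Theta$ is simply dropped, as in (\ref{cr}). We expect to adopt the fallback, with the remark that (\ref{ep}) is in practice invoked together with $F\ge0$. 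In either case $\|u_n\|^{-p}\int F(x,u_n)\to+\infty$, contradicting the bound above. Hence $\Phi(u)\to-\infty$ as $\|u\|\to\infty$ in $E$.
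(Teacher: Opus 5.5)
Once you commit to the fallback $F\ge 0$, your argument is the paper's: normalize, use $\dim E<\infty$ to get $v\neq 0$, run Fatou on $\Theta$ as in \eqref{cr}, and set this against the upper bound from \eqref{Po}. The paper's ``similar to \eqref{cr}'' silently relies on the same sign condition from $(f_1)$, so your version is at least as complete as the paper's.

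The fallback is forced, not optional, so drop ``in either case''. The sign is also needed on $\Theta$, not only on its complement: pointwise eventual nonnegativity gives Fatou no integrable minorant. Under $(f_{\ast})$ and \eqref{ep} alone the lemma is false. Take $V\equiv 1$, $v=e^{-|x|^2}$ and $T(x)=v(x)e^{e^{|x|^2}}$. Let $F(x,t)=-|t|^q$ for $|t|\le T(x)$, switched by a smooth cutoff in $|t|/T(x)$ to $|t|^p\log(1+|t|)$ for $|t|\ge 2T(x)$. Then $\int F(x,sv)\le -c\,s^q(\log s)^{-2q}+C s^p\log s$ for large $s$, hence $\Phi(sv)\to+\infty$.
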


\begin{proof}
  Let $\{ u_n \}$ be a sequence in a finite dimensional subspace $Y$
  satisfying $\| u_n \| \rightarrow \infty$. Set $v_n = \| u_n \|^{- 1} u_n$.
  Since $\dim Y < \infty$, we may assume
  \[ \| v_n - v \| \rightarrow 0 \text{, \qquad$v_n \rightarrow v$\quad a.e.\
     on $\mathbb{R}^N$} \]
  for some $v \in Y \cap \partial B_1$, in particular $v \neq 0$. Similar to
  (\ref{cr}) we have
  \begin{equation}
       \frac{1}{\| u_n \|^p} \int F (x, u_n) \rightarrow + \infty \text{.} \label{Q}
  \end{equation}
  The desired result follows from the following consequence of (\ref{Q})
  \begin{align*}
    \Phi (u_n) & =  \| u_n \|^p \left( \frac{Q (u_n)}{\| u_n \|^p} +
    \frac{1}{\| u_n \|^p} \int | \nabla u_n |^p - \frac{1}{\| u_n \|^p} \int F
    (x, u_n) \right) \rightarrow - \infty \text{.}
  \end{align*}
  
\end{proof}

\section{Proof of Theorem \ref{t1}}

As mentioned before, solutions of (\ref{e1}) will be found as critical points
of $\Phi : X \rightarrow \mathbb{R}$ given in (\ref{B}) or equivalently,
(\ref{pm}). To this end we need some concepts and results from infinite
dimensional Morse theory, see {\cite{MR1196690}} or {\cite[Chp 8]{MR982267}} for an excellent account of the theory.

Let $\varphi : X \rightarrow \mathbb{R}$ be a $C^1$-functional on a Banach
space $X$. If $u \in X$ is an isolated critical point of $\varphi$ with
$\varphi (u) = c$, we call
\[ C_k (\varphi, u) = H_k (\varphi^c, \varphi^c \backslash \{ u \}) \text{,
   \qquad$k = 0, 1, 2, \ldots$} \]
the $k$-th critical group of $\varphi$ at $u$, where $\varphi^c = \varphi^{-
1} (- \infty, c]$, $H_k$ stands for the $k$-th singular homology group of the
topological pair with coefficients in $\mathbb{Q}$.

If $\varphi : X \rightarrow \mathbb{R}$ satisfies the $(\tmop{PS})$ condition
and $- \infty < \alpha < \inf_{\mathcal{K}} \varphi$, following
{\cite{MR1420790}}, we call
\[ C_k (\varphi, \infty) = H_k (X, \varphi^{\alpha}) \]
the $k$-th critical group of $\varphi$ at infinity. It should be pointed out
that by deformation lemma (a con\-s\-e\-q\-uence of the $(\tmop{PS})$ condition) the right
hand side does not depend on the choice of $\alpha$.

It is well known that $C_{\ast} (\varphi, u)$ characterize the local property
of $\varphi$ near $u$, while $C_{\ast} (\varphi, \infty)$ characterize the
global property of $\varphi$. When they are distinct, there must be a critical
point different from $u$.

\begin{proposition}[{\cite[Prop 3.6]{MR1420790}}]
  \label{p}If $\varphi \in C^1 (X, \mathbb{R})$ satisfies the $(\tmop{PS})$
  condition, $C_{\ell} (\varphi, 0) \neq C_{\ell} (\varphi, \infty)$ for some
  $\ell \in \mathbb{N}$, then $\varphi$ has a nonzero critical point.
\end{proposition}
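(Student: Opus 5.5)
This is a standard argument by contradiction, using the deformation lemma. Suppose $0$ is the only critical point of $\varphi$; the PS condition then makes $0$ isolated, so $C_*(\varphi,0)$ is defined. Take the critical value $c=\varphi(0)$ and choose $\alpha<c$. Since $\mathcal{K}=\{0\}$, we have $\inf_{\mathcal{K}}\varphi=c$, and hence $C_k(\varphi,\infty)=H_k(X,\varphi^\alpha)$. The goal is to show that $C_k(\varphi,0)\cong H_k(X,\varphi^\alpha)$ for every $k$, which contradicts the hypothesis at $k=\ell$.

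The proof has three deformation steps.

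\emph{Step 1.} Since $\varphi$ has no critical values above $c$ and satisfies (PS), the second deformation lemma (or the standard argument with a pseudo-gradient flow, which can be run for all time because there are no critical points in $\{\varphi>c\}$) shows that $\varphi^c$ is a strong deformation retract of $X$. Points starting above $c$ flow down until they reach level $c$; this requires (PS) to guarantee that the descent reaches $\varphi^c$.

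\emph{Step 2.} Since $(\alpha,c)$ contains no critical values and $\varphi^{-1}(\alpha,c)$ contains no critical points, the same lemma shows that $\varphi^\alpha$ is a strong deformation retract of $\varphi^c\setminus\{0\}$. This is the usual statement that $\varphi^a$ is a deformation retract of $\varphi^b\setminus\mathcal{K}_b$ when $(a,b)$ has no critical values and $\mathcal{K}_b=\mathcal{K}\cap\varphi^{-1}(b)$ consists of isolated points.

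\emph{Step 3.} By homotopy invariance and the five lemma applied to the long exact sequences of the pairs, Steps 1 and 2 give
\[ C_k(\varphi,0)=H_k(\varphi^c,\varphi^c\setminus\{0\})\cong H_k(X,\varphi^\alpha)=C_k(\varphi,\infty) \]
for all $k$. Explicitly, the inclusions $(\varphi^c,\varphi^\alpha)\hookrightarrow(\varphi^c,\varphi^c\setminus\{0\})$ and $(\varphi^c,\varphi^\alpha)\hookrightarrow(X,\varphi^\alpha)$ both induce isomorphisms. This contradicts $C_\ell(\varphi,0)\neq C_\ell(\varphi,\infty)$, so $\varphi$ has a nonzero critical point.

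The main technical point is Step 1: deforming all of $X$ onto $\varphi^c$ when $\varphi$ is not bounded above. I would handle it with a flow along $-V/\|V\|^2$ for a pseudo-gradient $V$, defined on $\{\varphi>c\}$. Along this flow, $\varphi$ decreases at unit speed, and (PS) gives $\|\varphi'\|\ge\delta>0$ on $\{\varphi\ge c+\varepsilon\}$, so the flow stays well defined. The hitting time of level $c$ is continuous, except that near level $c$ itself one must take care, because points close to $0$ may approach $0$. This is the standard subtlety, resolved as in Chang's book (Theorem I.3.2) or Bartsch--Li, and I would cite it rather than reprove it.
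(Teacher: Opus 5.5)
Your proof is correct and is the standard argument behind the cited \cite[Prop 3.6]{MR1420790}; the paper itself gives no proof beyond this reference. If $\mathcal{K}=\{0\}$, the second deformation lemma applied on $[c,+\infty)$ and on $[\alpha,c]$ gives $C_*(\varphi,0)\cong H_*(\varphi^c,\varphi^\alpha)\cong C_*(\varphi,\infty)$, exactly as you argue. One minor correction: when $\varphi$ is unbounded above, (PS) only gives $\inf\|\varphi'\|>0$ on each slab $\{c+\varepsilon\le\varphi\le M\}$, not on all of $\{\varphi\ge c+\varepsilon\}$, but that is all you need, since a descending flow line never rises above its starting level.
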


To get multiple solutions of (\ref{e1}) when $f (x, \cdot)$ is odd, we need
the symmetric mountain pass theorem of 
{\cite{MR0370183}}.

\begin{proposition}[{\cite[Thm 9.12]{MR845785}}]
  \label{mp}Let $X$ be an infinite dimensional Banach space, $\varphi \in C^1
  (X, \mathbb{R})$ be even, satisfies $(\tmop{PS})$ condition and $\varphi (0)
  = 0$. If $\varphi$ satisfies
  \begin{enumerate}
    \item[$(I_1)$] for any finite dimensional subspace $W \subset X$, there is
    $R (W) > 0$ such that $\varphi \leq 0$ on $W \backslash B_{R (W)}$,
    
    \item[$(I_2)$] there are a finite codimensional subspace $Z$ and constants
    $\rho, \alpha > 0$ such that $ \varphi |_{\partial B_{\rho} \cap
    Z} \geq \alpha$,
  \end{enumerate}
  then $\varphi$ has a sequence of critical values $c_j \rightarrow + \infty$.
\end{proposition}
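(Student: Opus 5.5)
The plan is to follow Rabinowitz's minimax scheme based on the Krasnosel'skii genus $\gamma$. Since the statement is quoted from \cite[Thm 9.12]{MR845785}, I would only reconstruct its structure.

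\emph{Minimax classes.} Fix a basis $\{e_k\}$ of $X$ and set $W_k=\operatorname{span}\{e_1,\dots,e_k\}$. By $(I_1)$ there is $R_k>0$ with $\varphi\le 0$ on $W_k\setminus B_{R_k}$, and we may take $R_k$ increasing. Let $D_k=\overline{B}_{R_k}\cap W_k$ and
\[ G_k=\{h\in C(D_k,X) : h \text{ odd},\ h=\mathrm{id} \text{ on } \partial B_{R_k}\cap W_k\}. \]
Define $\Gamma_j$ as the family of sets $h(\overline{D_k\setminus Y})$ with $k\ge j$, $h\in G_k$, and $Y\subset D_k$ closed, symmetric, with $\gamma(Y)\le k-j$. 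Set
\[ c_j=\inf_{A\in\Gamma_j}\max_{u\in A}\varphi(u). \]
The routine properties to check are:
\begin{enumerate}
\item $\Gamma_{j+1}\subset\Gamma_j$, hence $c_j\le c_{j+1}$.
\item $\Gamma_j$ is invariant under odd homeomorphisms $\eta$ with $\eta=\mathrm{id}$ on $\varphi^0$. These are the deformations produced by the equivariant deformation lemma, which uses the (PS) condition and the evenness of $\varphi$.
\item If $A\in\Gamma_j$ and $Z'$ is closed, symmetric, with $\gamma(Z')\le s<j$, then $\overline{A\setminus Z'}\in\Gamma_{j-s}$.
\end{enumerate}

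\emph{Lower bound via $(I_2)$.} The key intersection lemma states that if $j>\operatorname{codim}Z$, then every $A\in\Gamma_j$ meets $\partial B_\rho\cap Z$. To prove it, write $A=h(\overline{D_k\setminus Y})$ and consider
\[ \mathcal O=\{u\in D_k : h(u)\in B_\rho\}, \]
a symmetric bounded neighborhood of $0$ in $W_k$ (oddness gives $h(0)=0$, and $\rho<R_k$ for large $k$). By the Borsuk–Ulam property, $\gamma(\partial\mathcal O)=k$. Composing $h$ with the projection onto a complement of $Z$, which has dimension $\operatorname{codim} Z<j$, and using the subadditivity and monotonicity of genus, one gets
\[ \gamma\big(\partial\mathcal O\cap h^{-1}(Z)\setminus Y\big)\ge k-(j-1)-(k-j)>0. \]
Hence $A\cap\partial B_\rho\cap Z\neq\emptyset$, and therefore $c_j\ge\alpha>0$ for all large $j$. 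A standard deformation argument then shows that each such $c_j$ is a critical value: otherwise one deforms a nearly optimal $A$ below $c_j$, using property 2 and the fact that $c_j>0$ keeps $\varphi^0$ fixed.

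\emph{Unboundedness.} The step I expect to be the main obstacle is proving $c_j\to+\infty$. I would first prove the multiplicity statement: if $c_j=c_{j+1}=\dots=c_{j+p}=c$, then $\gamma(K_c)\ge p+1$. The argument runs as follows. $K_c$ is compact and symmetric by (PS), so it has a closed symmetric neighborhood $N$ with $\gamma(N)=\gamma(K_c)$. If $\gamma(K_c)\le p$, take $A\in\Gamma_{j+p}$ with $\max_A\varphi\le c+\varepsilon$. By property 3, $\overline{A\setminus N}\in\Gamma_j$, and deforming it via $\eta$ lowers its maximum below $c$, contradicting $c_j=c$.

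Now suppose $c_j\uparrow\bar c<\infty$. The set $K=\{u : \varphi(u)\in[\alpha,\bar c],\ \varphi'(u)=0\}$ is compact by (PS), so $\gamma(K)=m<\infty$, and it has a symmetric neighborhood $N$ with $\gamma(N)=m$. The deformation lemma then gives $\eta$ mapping $\varphi^{\bar c+\varepsilon}\setminus N$ into $\varphi^{\bar c-\varepsilon}$. Choose $j$ large so that $c_j>\bar c-\varepsilon$, and take $A\in\Gamma_{j+m}$ with $\max_A\varphi\le\bar c+\varepsilon$. Then $\eta(\overline{A\setminus N})\in\Gamma_j$ has maximum at most $\bar c-\varepsilon<c_j$, a contradiction.

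The delicate points here are making the deformation odd, fixing $\varphi^0$, and getting uniform control of the neighborhood of $K$ across the whole band of levels $[\alpha,\bar c]$; these are handled as in \cite{MR845785}.
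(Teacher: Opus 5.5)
Your outline is a correct reconstruction of Rabinowitz's genus-based minimax proof of his Theorem 9.12, which is exactly the result the paper invokes here without giving a proof. The classes $\Gamma_j$, the intersection lemma via $\gamma(\partial\mathcal O)=k$ and the projection onto a complement of $Z$, and the deformation arguments all match. One small simplification: for $c_j\to+\infty$ you only need a closed symmetric neighborhood $N$ of $K_{\bar c}$ with $\gamma(N)=\gamma(K_{\bar c})$ and the deformation lemma at the single level $\bar c$, applied off the interior of $N$; this handles both the eventually-constant and strictly-increasing cases at once, so neither the band set $K$ nor the multiplicity step is needed, and the uniformity issue you flag does not arise.
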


Now, we are ready to prove Theorem \ref{t1}.

\begin{proof}[Proof of Theorem \ref{t1}]
  By Lemma \ref{l4}, $\Phi$ satisfies the $(\tmop{PS})$ condition. Lemma
  \ref{8} says that $\Phi$ has a local linking at $u = 0$, which by {\cite[Thm
  2.1]{MR1110119}} implies
  \begin{align}
  C_{\ell} (\Phi, 0) \neq 0 \text{,} \label{Cc}
  \end{align}
  where $\ell = \dim X^-$. As a special case of Lemma \ref{10} we have
  \[ \Phi (t u) \rightarrow - \infty \text{\qquad as $t \rightarrow +
     \infty$.} \]
  Thus as in the proof of {\cite[Lem 3.4]{MR3656292}} (which follows idea
  tracing back to  {\cite{MR1094651}}), by Lemma \ref{4}, for some
  \[ A > \sup_{\| u \| \leq 2} | \Phi (u) | \]
  there is a unique $t_u > 0$ such that $\Phi (t_u u) = - A$, and by (\ref{Er}) and the
  implicit function theorem $T : u \mapsto t_u$ is continuous. Using the
  function $T$ we construct a strong deformation retract $\eta : X \backslash
  B_1 \rightarrow \Phi^{- A}$ via
  \[ \eta (u) = \Bigg\{ \begin{array}{lll}
       u &  & \text{if $\Phi (u) \leq - A$,}\\
       T \bigg( \dfrac{u}{\| u \|} \bigg) \dfrac{u}{\| u \|} &  & \text{if
       $\Phi (u) > - A$}
     \end{array} \]
  and deduce
  \begin{align}
   C_k (\Phi, \infty) = H_k (X, \Phi_{- A}) \cong H_k (X, X \backslash B_1)
     = 0 \text{, \qquad$k = 0, 1, \ldots$} \label{cC}
     \end{align}
  because $\dim X = \infty$.
  
 From (\ref{Cc}) and (\ref{cC}) we have $C_{\ell} (\Phi, 0) \neq C_{\ell} (\Phi, \infty)$. By Proposition
  \ref{p} our $\Phi$ has a nonzero critical point and the first part of
  Theorem \ref{t1} is proved.
  
  If $f (x, \cdot)$ is odd then $\Phi : X \rightarrow \mathbb{R}$ is even.
  To get a sequence of critical values of $\Phi$ and derive the second part of
  Theorem \ref{t1} via Proposition \ref{mp}, it suffices to verify condition
  $(I_2)$ because $(I_1)$ follows directly from Lemma \ref{10}.
  
  We take $Z = X^+$, the positive space of $Q$. There is $\kappa > 0$ such
  that
  \begin{equation}
    \frac{1}{2} \int (| \Delta u |^2 + V (x) u^2) \geq \kappa \| u \|^2
    \text{\qquad for $u \in Z$.} \label{Z}
  \end{equation}
  For $m \in [2, 2_{\ast}]$, let $S_m$ be the norm of the continuous embedding
  $X \hookrightarrow L^m (\mathbb{R}^N)$. Due to $(f_{\ast})$ and $(f_0)$,
  there is $C > 0$ such that
  \[ | F (x, t) | \leq \frac{\kappa}{2 S_2^2} | t |^2 + C | t |^q \text{.} \]
  Choose $\rho > 0$ small enough such that
  \[ r: =\frac{\kappa}{2} - C S_q^q \rho^{q - 2} > 0 \text{.} \]
  Then, for $u \in \partial B_{\rho} \cap Z$
  \begin{align*}
    \Phi (u) & =  \frac{1}{2} \int (| \nabla u |^2 + V (x) u^2) + \frac{1}{p}
    \int | \nabla u |^p - \int F (x, u)\\
    & \geq  \kappa \| u \|^2 - \int F (x, u)\\
    & \geq  \kappa \| u \|^2 - \frac{\kappa}{2 S_2^2} | u |_2^2 - C | u
    |_q^q\\
    & \geq  \frac{\kappa}{2} \| u \|^2 - C S_q^q \| u \|^q = \rho^2 r
    \text{.}
  \end{align*}
  This verifies $(I_2)$ with $\alpha = \rho^2 r$.
\end{proof}

\section{Proofs of Theorems \ref{t2} and \ref{t3}}

Checking the proof of Theorem \ref{t1}, we see that $(f_2)$ is only used to
derive (\ref{W}) from $u_n \rightharpoonup u$ in $X$, and for getting multiple
solutions $(f_0)$ is only needed for verifying condition $(I_2)$ of
Proposition \ref{mp}. Therefore, to prove Theorems \ref{t2} and \ref{t3}, it
suffices to derive (\ref{W}) assuming (\ref{F}) or the coerciveness of $V$,
and verify $(I_2)$ without assuming $(f_0)$.

\begin{proposition}
  \label{p4}Under the assumptions $(V_0)$ and $(f_{\ast})$, if $u_n \rightharpoonup u$ in $X$, then
  \begin{equation}
    \int f (x, u_n) (u_n - u) \rightarrow 0 \text{,} \label{P}
  \end{equation}
  provided either \eqref{F} holds or $V$ is
  coercive.
\end{proposition}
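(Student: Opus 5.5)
Both cases rest on the same facts. Since $u_n \rightharpoonup u$ in $X$, the sequence $\{u_n\}$ is bounded in $X$, hence in $H^2(\mathbb{R}^N)$ and in every $L^m$, $m\in[2,2_\ast]$. The embedding $H^2(B_R)\hookrightarrow L^m(B_R)$ is compact for $m<2_\ast$, so $u_n\to u$ in $L^m_{\mathrm{loc}}$ for every $m\in[2,2_\ast)$. The plan is to split $\int f(x,u_n)(u_n-u)$ into a part over a large ball $B_R$, which is handled by this local compactness, and a tail over $\mathbb{R}^N\setminus B_R$, which must be made uniformly small in $n$. The two hypotheses give two ways of controlling the tail.

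\emph{Coercive case.} By Remark \ref{r1}(b), $X\hookrightarrow L^m$ is compact for $m\in[2,2_\ast)$, so $u_n\to u$ in $L^2$ and in $L^q$. By $(f_\ast)$ and Hölder,
\[
\Big|\int f(x,u_n)(u_n-u)\Big|\le C|u_n|_2|u_n-u|_2+C|u_n|_q^{q-1}|u_n-u|_q\to0,
\]
because $|u_n|_2$ and $|u_n|_q$ are bounded. No splitting is needed here.

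\emph{Case \eqref{F}.} It suffices to treat one term $\alpha(x)|t|^{q_1-1}$ with $\alpha\in L^{s}$, where $s=q_1^{\#}$.

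First take $N\ge5$. Then $s=2_\ast/(2_\ast-q_1)$, and Hölder with exponents $s$, $2_\ast/(q_1-1)$, $2_\ast$ gives
\[
\int_{\mathbb{R}^N\setminus B_R}\alpha|u_n|^{q_1-1}|u_n-u|\le |\alpha|_{L^s(\mathbb{R}^N\setminus B_R)}\,|u_n|_{2_\ast}^{q_1-1}\,|u_n-u|_{2_\ast}.
\]
This is small uniformly in $n$ once $R$ is large, since $\alpha\in L^s$ and $\{u_n\}$ is bounded in $L^{2_\ast}$. On $B_R$ the function $\alpha$ is bounded because $\alpha\in L^\infty_{\mathrm{loc}}$, so
\[
\int_{B_R}\alpha|u_n|^{q_1-1}|u_n-u|\le C_R|u_n|_{q_1}^{q_1-1}|u_n-u|_{L^{q_1}(B_R)}\to0.
\]
Letting $n\to\infty$ and then $R\to\infty$ gives \eqref{P}.

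For $N\le4$, $s\in(1,\infty)$ is arbitrary. The exponents $2_\ast$ above are replaced by a finite $m$ with $\frac1s+\frac{q_1-1}{m}+\frac1m=1$, that is, $m=q_1 s'$. This $m$ is finite and at least $2$ because $q_1\ge2$ and $s'>1$. Since $H^2\hookrightarrow L^m$ for all $m\ge2$ when $N\le4$, the same splitting applies.

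\emph{Main obstacle.} The key point is the exponent bookkeeping that makes the tail estimate uniform in $n$. The choice of $q_i^{\#}$ in \eqref{F} is exactly what makes the Hölder exponents close up at the endpoint $2_\ast$, where only boundedness (not compactness) is available. The local part is routine given the compact local embedding.
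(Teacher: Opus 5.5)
Your proof is correct and follows the paper's argument essentially step for step. In the coercive case you use compact embedding plus H\"older. Under \eqref{F} you split into $B_R$ and its complement: the tail is controlled uniformly in $n$ by H\"older with exponents $q_i^{\#}$, $2_\ast/(q_i-1)$, $2_\ast$, and the ball is handled by $\alpha_i\in L^\infty_{\mathrm{loc}}$ and local compactness. For $N\le4$, your explicit choice $m=q_1 s'$ with $s=q_1^{\#}$ just makes precise what the paper leaves implicit by treating $2_\ast$ as an arbitrary large number.
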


\begin{proof}
  If $V$ is coercive, for $m \in [2, 2_{\ast})$ the embedding $X
  \hookrightarrow L^m (\mathbb{R}^N)$ is compact. Thus $u_n \rightarrow u$ in
  $L^m (\mathbb{R}^N)$. By $(f_{\ast})$ and H{\"o}lder inequality we get
  \begin{align*}
    \left| \int f (x, u_n) (u_n - u) \right| & \leq  C \int (| u_n | + | u_n
    |^{q - 1}) | u_n - u |\\
    & \leq  C (| u_n |_2 | u_n - u |_2 + | u_n |_q^{q - 1} | u_n - u |_q)
    \rightarrow 0 \text{.}
  \end{align*}
  Now assume that (\ref{F}) holds. Given $\varepsilon > 0$, since $\alpha_i
  \in L^{q_i^{\#}}$, $\sup_n | u_n |_{2_{\ast}} < \infty$ and
  \[ \frac{1}{q_i^{\#}} + \frac{q_i - 1}{2_{\ast}} + \frac{1}{2_{\ast}} = 1
     \text{,} \]
  applying H{\"o}lder inequality we have
  \[ \int_{| x | \geq R} \alpha_i (x) | u_n |^{q_i - 1} | u_n - u | \leq
     \left( \int_{| x | \geq R} | \alpha_i (x) |^{q_i^{\#}} \right)^{1 /
     q_i^{\#}} | u_n |_{2_{\ast}}^{q_i - 1} | u_n - u |_{2_{\ast}} \leq
     \varepsilon \text{} \]
  for some $R > 1$. Noting $\alpha_i \in L_{\mathrm{loc}}^{\infty}$, it
  follows from (\ref{F}) that
  \begin{align}
    \left| \int f (x, u_n) (u_n - u) \right| & \leq  \sum_{i = 1}^2 \left(
    \int_{| x | \geq R} + \int_{| x | < R} \right) \alpha_i (x) | u_n |^{q_i -
    1} | u_n - u | \nonumber\\
    & \leq  2 \varepsilon + \sum_{i = 1}^2 | \alpha_i |_{\infty, B_R}
    \int_{| x | < R} | u_n |^{q_i - 1} | u_n - u | \nonumber\\
    & \leq  2 \varepsilon + \sum_{i = 1}^2 | \alpha_i |_{\infty, B_R} | u_n
    |_{q_i, B_R}^{q_i - 1} | u_n - u |_{q_i, B_R} \text{,} \label{16} 
  \end{align}
  where for $m \in [2, \infty]$ and $D\subset\mathbb{R}^N$, we write $| \cdot |_{m, D}$ for the $L^m
  (D)$ norm. Since $q_i \in [2, 2_{\ast})$, by the compact embedding $X
  \hookrightarrow L_{\mathrm{loc}}^{q_i}$ we have $| u_n - u |_{q_i, B_R}
  \rightarrow 0$. Thus
  \[ \varlimsup_{n \rightarrow \infty} \left| \int f (x, u_n) (u_n - u) \right|
     \leq 2 \varepsilon \]
  and (\ref{P}) follows by letting $\varepsilon \rightarrow 0$.
\end{proof}

\begin{remark}
  \label{r14}Using similar argument, we can show that $\Psi : X \rightarrow
  \mathbb{R}$ defined via
  \begin{equation}
    \Psi (u) = \int F (x, u) \label{17}
  \end{equation}
  is weakly continuous at $u = 0$. Let's assume (\ref{F}), which implies
  \[ | F (x, t) | \leq \frac{1}{q_1} \alpha_1 (x) | t |^{q_1} + \frac{1}{q_2}
     \alpha_2 (x) | t |^{q_2} \text{.} \]
  Given $\varepsilon > 0$, take $R > 0$ such that
  \[ \frac{1}{q_i} \int_{| x | \geq R} \alpha_i (x) | u |^{q_i} \leq
     \frac{1}{q_i} \left( \int_{| x | \geq R} | \alpha_i (x) |^{q_i^{\#}}
     \right)^{1 / q_i^{\#}} | u_n |_{2_{\ast}} < \varepsilon \text{.} \]
  Similar to (\ref{16}) we deduce
  \begin{align*}
    | \Psi (u_n) - \Psi (0) | & =  \left| \int F (x, u_n) \right|\\
    & \leq  \sum_{i = 1}^2 \frac{1}{q_i} \left( \int_{| x | \geq R} +
    \int_{| x | < R} \right) \alpha_i (x) | u_n |^{q_i}\\
    & \leq  2 \varepsilon + \sum_{i = 1}^2 \frac{1}{q_i} | \alpha_i
    |_{\infty, B_R} | u_n |_{q_i, B_R}^{q_i}
  \end{align*}
  and the result follows because $| u_n |_{q_i, B_R} \rightarrow 0$. Actually it is possible to show that $\Psi$ is weakly continuous at all $u\in X$, but we don't need this result.
\end{remark}

Clearly Proposition \ref{p4} is stronger than (\ref{W}). Thus, as commented at
the beginning of this section, Part (a) of Theorems \ref{t2} and \ref{t3} has
been proved. To prove Part (b) of the theorems, we verify condition $(I_2)$ of
Proposition \ref{mp} below. Note that unlike Theorem \ref{t1}, condition $(f_0)$ is not assumed here.

Let $\{\phi_i \}_{i = 1}^{\ell}$ be an orthogonal basis of $X^-$. We extend
$\{\phi_i \}_{i = 1}^{\ell}$ to an orthogonal basis $\{\phi_i \}_{i =
1}^{\infty}$ of $X$. For $n \in \mathbb{N}$ set
\[ Y_n = \mathrm{span} \{\phi_1, \ldots, \phi_n \}  \text{, \qquad} Z_n =
   \overline{\mathrm{span}} \{\phi_i \mid i \geq n + 1\} \text{.} \]
Then $X = Y_n \oplus Z_n$, so all $Z_n$ are finite codimensional. Note that
$Z_{\ell} = X^+$.

By Remark \ref{r14}, we know that $\Psi : X \rightarrow \mathbb{R}$ defined in
(\ref{17}) is weakly continuous at $u = 0$. Applying {\cite[Lem
3.3]{MR2092084}} (checking its proof, in which $\{f_m\}$ is basis of $X^*$ satisfying $\langle f_m,\phi_n\rangle=\delta_{mn}$, weak continuity at $u = 0$ is
{\emph{sufficient}} for the conclusion), we deduce
\[ \beta_n = \sup_{Z_n \cap \partial B_1} | \Psi | \rightarrow 0 \text{.} \]
Take $n > \ell$ such that $\kappa - \beta_n > 0$. Then $Z=Z_n$ is finite codimensional.

For $u \in Z_n \cap \partial B_1$, since $Z_n \subset Z_{\ell}
= X^+$, using (\ref{Z}) we have
\begin{align*}
  \Phi (u) & \geq  \frac{1}{2} \int (| \Delta u |^2 + V (x) u^2) - \int F (x,
  u)\\
  & \geq  \kappa - \Psi (u) \geq \kappa - \beta_n \text{.}
\end{align*}
We deduce
\[ \inf_{Z_n \cap \partial B_1} \Phi \geq \kappa - \beta_n > 0 \text{} \]
and $(I_2)$ of Proposition \ref{mp} follows. This complete the proofs of
Theorems \ref{t2} and \ref{t3}.

\section{Proof of Theorem \ref{ht5}}

We write
\begin{equation}
  S = \inf_{u \in X \backslash \{ 0 \}} \frac{\| u \|^2}{| u |_m^2} \text{,
  \qquad} c^{\ast} = \left( \frac{1}{2} - \frac{1}{m} \right) S^{m / (m - 2)}
  \text{.} \label{hst}
\end{equation}
Consider the $C^1$-functional $\Phi : X \rightarrow \mathbb{R}$
\begin{align*}
  \Phi (u) & =  \frac{1}{2} \int (| \Delta u |^2 + V u^2) + \frac{1}{p} \int
  | \nabla u |^p - \frac{\lambda}{q} \int g | u |^q - \frac{1}{m} \int | u
  |^m\\
  & =  \frac{1}{2} \| u \|^2 + \frac{1}{p} \int | \nabla u |^p -
  \frac{\lambda}{q} \int g | u |^q - \frac{1}{m} \int | u |^m \text{.}
\end{align*}
It is clear that criticle points of $\Phi$ are solutions of (\ref{he2}).

\begin{lemma}
  \label{h3l}Under the assumptions of Theorem \ref{ht5}, $\Phi$ satisfies $(\tmop{PS})_c$ for $c \in (0, c^{\ast})$.
\end{lemma}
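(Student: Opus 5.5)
We follow the standard Brezis--Nirenberg-type argument: show that a $(\tmop{PS})_c$ sequence is bounded, pass to a weak limit, and use the Brezis--Lieb lemma together with the definition of $S$ to rule out loss of compactness below $c^{\ast}$.

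\emph{Step 1: boundedness.} Let $\{u_n\}$ satisfy $\Phi(u_n)\to c$ and $\nabla\Phi(u_n)\to0$. Take $\theta\in(q,m)$ if $q>p$; if $p=q$ (only allowed for $p>2$), take $\theta=q=p$. Then
\[ \Phi(u_n)-\tfrac1\theta\langle\nabla\Phi(u_n),u_n\rangle=\Big(\tfrac12-\tfrac1\theta\Big)\|u_n\|^2+\Big(\tfrac1p-\tfrac1\theta\Big)\int|\nabla u_n|^p-\lambda\Big(\tfrac1q-\tfrac1\theta\Big)\int g|u_n|^q+\Big(\tfrac1\theta-\tfrac1m\Big)\int|u_n|^m . \]
We bound $\lambda\int g|u_n|^q\le \lambda|g|_{2_*/(2_*-q)}S_{2_*}^q\|u_n\|^q$ by H\"older. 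When $\theta>q$, the $|u_n|^m$ term is used to absorb the $g$-term. On the set $\{|u_n|\le1\}$, with $m>q$, we use $|u_n|^q\le\varepsilon|u_n|^m+C_\varepsilon g$-weighted pieces via Young's inequality in the form $g|u|^q\le\varepsilon|u|^m+C_\varepsilon g^{m/(m-q)}$. This needs $g\in L^{m/(m-q)}$, which is not given, so instead we split the integral. The cleaner route is to note $q<2$ is false here, so the $\|u_n\|^2$ term with coefficient $\frac12-\frac1\theta>0$ dominates the $\|u_n\|^q$ term only if $q<2$; since $q>2$ this fails. We therefore rely on Young's inequality: $\int g|u|^q\le \varepsilon\int|u|^m+C_\varepsilon\int g^{m/(m-q)}$, with the last quantity finite after checking integrability (or after truncating $g$ on large sets and using the $2_*$-norm on the remainder). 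This gives $c+1+o(1)\|u_n\|\ge a\|u_n\|^2-C$, so $\{u_n\}$ is bounded. In the case $p=q=\theta$ the $|\nabla u|^p$ coefficient vanishes, but the same absorption works. I expect this to be a bookkeeping issue rather than the real obstacle.

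\emph{Step 2: weak limit.} Pass to a subsequence with $u_n\rightharpoonup u$ in $X$ and $u_n\to u$ a.e. Exactly as in Remark \ref{r14}, using $g\in L^{2_*/(2_*-q)}$ and the local compactness, we get $\int g|u_n|^q\to\int g|u|^q$ and $\int g|u_n|^{q-2}u_n\phi\to\int g|u|^{q-2}u\phi$. Lemma \ref{9} gives weak continuity of $\nabla\mathcal N$, and $|u_n|^{m-2}u_n\rightharpoonup|u|^{m-2}u$ in $L^{m'}$. Hence $\nabla\Phi(u)=0$, and so $\Phi(u)\ge0$: indeed
\[ \Phi(u)=\Phi(u)-\tfrac1\theta\langle\nabla\Phi(u),u\rangle\ge -\lambda\Big(\tfrac1q-\tfrac1\theta\Big)\int g|u|^q+\cdots , \]
and here it is simpler to use $\theta=q$ directly, which makes every remaining term nonnegative since $p\le q<m$ and $q>2$.

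\emph{Step 3: main obstacle, the Brezis--Lieb splitting.} Set $w_n=u_n-u$. By Brezis--Lieb, $\|u_n\|^2=\|u\|^2+\|w_n\|^2+o(1)$ and $|u_n|_m^m=|u|_m^m+|w_n|_m^m+o(1)$. The $p$-gradient term is the delicate point. We use $\int|\nabla u_n|^p\ge\int|\nabla u|^p+\int|\nabla w_n|^p+o(1)$, which is Brezis--Lieb applied to $\nabla u_n\to\nabla u$ a.e. (from the proof of Lemma \ref{9}), and in fact holds with equality. From $\langle\nabla\Phi(u_n),u_n\rangle-\langle\nabla\Phi(u),u\rangle\to0$ we obtain
\[ \|w_n\|^2+\int|\nabla w_n|^p-|w_n|_m^m\to0 , \]
using Brezis--Lieb for $\int|\nabla u_n|^p$ and $\int|u_n|^m$. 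Hence along a subsequence $\|w_n\|^2\to l$ and $|w_n|_m^m\to l'$ with $l\le l'$. If $l>0$, the definition of $S$ gives $l\ge S l'^{2/m}\ge S l^{2/m}$, so $l\ge S^{m/(m-2)}$. Then
\[ c=\lim\Phi(u_n)=\Phi(u)+\lim\Big(\tfrac12\|w_n\|^2+\tfrac1p\int|\nabla w_n|^p-\tfrac1m|w_n|_m^m\Big)\ge \Big(\tfrac12-\tfrac1m\Big)l+\Big(\tfrac1p-\tfrac1m\Big)\lim\int|\nabla w_n|^p\ge c^{\ast}, \]
where the middle step substitutes $|w_n|_m^m=\|w_n\|^2+\int|\nabla w_n|^p+o(1)$. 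This contradicts $c<c^{\ast}$. Therefore $l=0$, i.e. $u_n\to u$ in $X$.
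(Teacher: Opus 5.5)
Your Step 1 has a genuine gap when $q>p$. Choosing $\theta\in(q,m)$ leaves the term $-\lambda(\frac1q-\frac1\theta)\int g|u_n|^q$ to control. The Young inequality you rely on, $\int |g||u|^q\le\varepsilon\int|u|^m+C_\varepsilon\int|g|^{m/(m-q)}$, needs $g\in L^{m/(m-q)}$. For $m<2_*$ one has $m/(m-q)>2_*/(2_*-q)$, so this does not follow from $g\in L^{2_*/(2_*-q)}$, which allows local singularities of $g$.

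The truncation fallback does not close either. On the set where $|g|$ is large, H\"older with $g\in L^{2_*/(2_*-q)}$ forces the $L^{2_*}$-norm of $u_n$. That gives a term of order $\|u_n\|^q$, which cannot be absorbed by $(\frac12-\frac1\theta)\|u_n\|^2$ because $q>2$.

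The fix is the observation you make yourself in Step 2: take $\theta=q$ in all cases. Then the $g$-term cancels identically, with no sign or extra integrability needed for $g$, and for $n$ large
\[ c+1+\|u_n\|\ge\Phi(u_n)-\tfrac1q\langle\nabla\Phi(u_n),u_n\rangle=\Big(\tfrac12-\tfrac1q\Big)\|u_n\|^2+\Big(\tfrac1p-\tfrac1q\Big)\int|\nabla u_n|^p+\Big(\tfrac1q-\tfrac1m\Big)\int|u_n|^m\ge\Big(\tfrac12-\tfrac1q\Big)\|u_n\|^2 . \]
This gives boundedness since $q>2$, and it is exactly how the paper argues.

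Once this is repaired, the rest is correct and essentially the paper's proof.

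One imprecision in Step 2: Remark \ref{r14} uses that the coefficients are in $L^\infty_{\mathrm{loc}}$, which is not assumed for $g$. Argue instead, as the paper does, that $|u_n|^q$ and $|u_n|^{q-2}u_n\phi$ are bounded in $L^{2_*/q}$ and converge a.e., hence weakly there. Then pair them with $g\in L^{2_*/(2_*-q)}=(L^{2_*/q})^*$.

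In Step 3 you use the Nehari-type relation to eliminate $|w_n|_m^m$. The paper instead subtracts $\frac1p\langle\nabla\Phi(u_n),u_n\rangle$ from $\Phi(u_n)$ to eliminate the $\mathcal N$-terms. Both yield the same bound $c\ge(\frac12-\frac1p)l+(\frac1p-\frac1m)l'\ge(\frac12-\frac1m)l$.

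Your Brezis--Lieb splitting of $\int|\nabla u_n|^p$ is valid, since the gradients converge a.e. along a subsequence and are bounded in $L^p$. It is, however, unnecessary. Your substitution only uses $\varliminf_n\big(\int|\nabla u_n|^p-\int|\nabla u|^p\big)\ge0$, which is the weak lower semicontinuity in Lemma \ref{9}.
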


\begin{proof}
  Let $\{ u_n \} \subset X$ be a $(\tmop{PS})_c$ sequence, that is $\Phi (u_n)
  \rightarrow c$, $\nabla \Phi (u_n) \rightarrow 0$. Then for $n\gg1$
  \begin{align*}
    c + 1 & \geq  \Phi (u_n) - \frac{1}{q} \langle \nabla \Phi (u_n), u_n
    \rangle\\
    & =  \left( \frac{1}{2} - \frac{1}{q} \right) \| u_n \|^2 + \left( \frac{1}{p} - \frac{1}{q} \right)\int|\nabla u|^p + \left(
    \frac{1}{q} - \frac{1}{m} \right) \int | u_n |^m\\
    & \geq  \left( \frac{1}{2} - \frac{1}{q} \right) \| u_n \|^2 \text{,}
  \end{align*}
  we see that $\{ u_n \}$ is bounded. Thus, up to a subsequence
  \[ u_n \rightharpoonup u \text{\quad in $X$, \qquad$u_n \rightarrow u$\quad
     a.e.\ on $\mathbb{R}^N$.} \]
  Clearly $\{ | u_n |^{m - 2} u_n \}$ is bounded in $L^{m / (m - 1)}$, $| u_n
  |^{m - 2} u_n \rightarrow | u |^{m - 2} u$ a.e.\ on $\mathbb{R}^N$. By a well-known remark of
  Br{\'e}zis \& Lieb {\cite[pp.\ 487]{MR699419}} (see \cite[Lem 1.4.8]{MR1276944} for a proof), we have $| u_n |^{m - 2} u_n
  \rightharpoonup | u |^{m - 2} u$ in $L^{m / (m - 1)}$. Hence
  \begin{equation}
    \int | u_n |^{m - 2} u_n v \rightarrow \int | u |^{m - 2} u v \text{\qquad
    for $v \in X$.} \label{he5}
  \end{equation}
  Similarly, given $v \in X$, $\{ | u_n |^q \}$ and $\left\{ \left| {u_n} 
  \right|^{q - 2} u_n v \right\}$ are bounded in $L^{2_{\ast} / q}$ and
  converge pointwise on $\mathbb{R}^N$. Employing the above Br{\'e}zis--Lieb remark again we have
  \[ | u_n |^q \rightharpoonup | u |^q \text{\quad in $L^{2_{\ast} / q}
     \text{, \qquad}$} | u_n |^{q - 2} u_n v \rightharpoonup | u |^{q - 2} u v
     \text{\quad in $L^{2_{\ast} / q}$.} \]
  By our assumption $g \in L^{2_{\ast} / (2_{\ast} - q)}$, the dual space of
  $L^{2_{\ast} / q}$, we get
  \begin{equation}
    \int g | u_n |^q \rightarrow \int g | u |^q \text{, \qquad} \int g | u_n
    |^{q - 2} u_n v \rightarrow \int g | u |^{q - 2} u v \text{.} \label{he6}
  \end{equation}
  By Lemma \ref{9},
  \begin{align}
    \lim_{n \rightarrow \infty} \int | \nabla u_n |^{p - 2} \nabla u_n \cdot
    \nabla v & =  \lim_{n \rightarrow \infty} \langle \nabla \mathcal{N}
    (u_n), v \rangle \nonumber\\
    & =  \langle \nabla \mathcal{N} (u), v \rangle = \int | \nabla u |^{p -
    2} \nabla u \cdot \nabla v \text{,} \label{he7} \\
    \varliminf_{n \rightarrow \infty} \frac{1}{p} \int | \nabla u_n |^p & = 
    \varliminf_{n \rightarrow \infty} \mathcal{N} (u_n) \geq \mathcal{N} (u) =
    \frac{1}{p} \int | \nabla u |^p \text{.} \label{he8} 
  \end{align}
  From (\ref{he5}), (\ref{he6}) and (\ref{he7}) we see that for $v \in X$
  \begin{align*}
    0 & =  \lim_{n \rightarrow \infty} \langle \nabla \Phi (u_n), v \rangle\\
    & =  \lim_{n \rightarrow \infty} \left( \langle u_n, v \rangle + \int |
    \nabla u_n |^{p - 2} \nabla u_n \cdot \nabla v - \lambda \int g | u_n
    |^{q - 2} u_n v - \int | u_n |^{m - 2} u_n v \right)\\
    & =  \langle u, v \rangle + \int | \nabla u |^{p - 2} \nabla u \cdot
    \nabla v - \lambda \int g | u |^{q - 2} u v - \int | u |^{m - 2} u v\\
    & =  \langle \nabla \Phi (u), v \rangle \text{.}
  \end{align*}
  It follows that the weak limit $u$ is a critical point of $\Phi$ at
  nonnegative level:
  \begin{align}
    \Phi (u) & =  \Phi (u) - \frac{1}{q} \langle \nabla \Phi (u), u \rangle
    \nonumber\\
    & =  \left( \frac{1}{2} - \frac{1}{q} \right) \| u \|^2 +\left( \frac{1}{p} - \frac{1}{q} \right)\int|\nabla u|^p + \left( \frac{1}{q} -
    \frac{1}{m} \right) \int | u |^m \geq 0 \text{.} \label{he1} 
  \end{align}
  Let $v_n = u_n - u$. By Br{\'e}zis--Lieb lemma {\cite[Thm 1]{MR699419}}
  and the fact that $X$ is a Hilbert space, we have
  \begin{align}
    \| u_n \|^2 & =  \| u \|^2 + \| v_n \|^2 + o (1) \text{,} \label{hB} \\
    \int | u_n |^m & =  \int | u |^m + \int | v_n |^m + o (1) \text{.}
    \label{hA} 
  \end{align}
  Combining the first limit in (\ref{he6}) we deduce
  \begin{align}
    \Phi (u_n) & =  \frac{1}{2} \| u_n \|^2 + \frac{1}{p} \int | \nabla u_n
    |^p - \lambda \int g | u_n |^q - \frac{1}{m} \int | u_n |^m \nonumber\\
    & =  \frac{1}{2} \| u \|^2 + \frac{1}{2} \| v_n \|^2 + \frac{1}{p} \int
    | \nabla u_n |^p - \lambda \int g | u |^q \nonumber\\
    &   \qquad - \frac{1}{m} \int | u |^m - \frac{1}{m} \int | v_n |^m + o
    (1) \nonumber\\
    & =  \Phi (u) -\mathcal{N} (u) +\mathcal{N} (u_n) + \frac{1}{2} \| v_n
    \|^2 - \frac{1}{m} \int | v_n |^m + o (1) \text{,} \label{he3} 
  \end{align}
  and
  \begin{align}
    0 & =  \frac{1}{p} \langle \nabla \Phi (u_n), u_n \rangle + o (1)
    \nonumber\\
    & =  \frac{1}{p} \left( \| u_n \|^2 + \int | \nabla u_n |^p - \lambda
    \int g | u_n |^q - \int | u_n |^m \right) + o (1) \nonumber\\
    & =  \frac{1}{p} \left( \| u \|^2 + \| v_n \|^2 + \int | \nabla u_n |^p
    - \lambda \int g | u |^q - \int | u |^m - \int | v_n |^m \right) + o (1)
    \nonumber\\
    & =  \frac{1}{p} \left( \langle \nabla \Phi (u), u \rangle - \int |
    \nabla u |^p \right) + \frac{1}{p} \left( \| v_n \|^2 + \int | \nabla u_n
    |^p - \int | v_n |^m \right) + o (1) \nonumber\\
    & =  \frac{1}{p} \| v_n \|^2 -\mathcal{N} (u) +\mathcal{N} (u_n) -
    \frac{1}{p} \int | v_n |^m + o (1) \text{.} \label{he4} 
  \end{align}
  Substracting (\ref{he4}) from (\ref{he3}), using $\Phi (u_n) \rightarrow c$ we
  get
  \[ \Phi (u) + \left( \frac{1}{2} - \frac{1}{p} \right) \| v_n \|^2 + \left(
     \frac{1}{p} - \frac{1}{m} \right) \int | v_n |^m \rightarrow c \text{.}
  \]
  Up to a subsequence we may assume $\| v_n \|^2 \rightarrow a$, $| v_n |_m^m
  \rightarrow b$. Since $\Phi (u) \geq 0$ we deduce
  \begin{equation}
    c \geq \left( \frac{1}{2} - \frac{1}{p} \right) a + \left( \frac{1}{p} -
    \frac{1}{m} \right) b \text{.} \label{he0}
  \end{equation}
  Using (\ref{he6}), (\ref{he8}), (\ref{hB}) and (\ref{hA}) we get
  \begin{align*}
    0 & =  \varliminf_{n \rightarrow \infty} \langle \nabla \Phi (u_n), u_n
    \rangle\\
    & =  \varliminf_{n \rightarrow \infty} \left( \| u_n \|^2 + \int | \nabla
    u_n |^p - \lambda \int g | u_n |^q - \int | u_n |^m \right)\\
    & \geq  \| u \|^2 + a + \int | \nabla u |^p - \lambda \int g | u |^q -
    \int | u |^m - b\\
    & =  \langle \nabla \Phi (u), u \rangle + a - b = a - b \text{.}
  \end{align*}
  It follows that $b \geq a$. By definition of $S$, $\| v_n \|^2 \geq S | v_n
  |_m^2$, so
  \[ a \geq S b^{2 / m} \geq S a^{2 / m} \]
  Consequently, we have either $a = 0$ or $a \geq S^{m / (m - 2)}$.
  
  If $a \geq S^{m / (m - 2)}$, by (\ref{he0}) and $b \geq a$ we get
  \[ c \geq \left( \frac{1}{2} - \frac{1}{m} \right) a \geq \left( \frac{1}{2}
     - \frac{1}{m} \right) S^{m / (m - 2)} = c^{\ast} \text{,} \]
  a contradiction. Therefore, we must have $a = 0$. That is, $\| u_n - u \|
  \rightarrow 0$.
\end{proof}

\begin{remark}
  Unlike the classical problem
  \[ - \Delta u + \lambda u = | u |^{2^{\ast} - 2} u \text{, \qquad$u \in
     H_0^1 (\Omega)$} \]
  treated for example in {\cite[{\textsection}1.10]{MR1400007}}, due to the
  presence of $\mathcal{N} (u)$, which is only weakly lower semi-continuous
  (Lemma \ref{9}), we can only derive $b \geq a$, which can not be used
  together with the direct consequence of (\ref{he3})
  \[ c \geq \frac{1}{2} a - \frac{1}{m} b \]
  to produce a contradiction. By substracting (\ref{he4}) from (\ref{he3}), the
  bad terms involving $\mathcal{N}$ cancel out, yielding (\ref{he0}) in which
  $b$ has positive coefficient, thus resolve this problem. This idea was first used in \cite{MR5057254} for the study of {S}chr\"odinger-{P}oisson-{S}later
 equations.
\end{remark}

Having verified the local $(\tmop{PS})$ condition, we are ready to present the
proof of Theorem \ref{ht5}. For a symmetric subset $A \subset X \backslash \{ 0
\}$ we denote by $i (A)$ the cohomological index of $A$, introduced by Fadell
\& Rabinowitz {\cite{MR0478189}}. If $A$ is homeomorphic to the unit sphere
$S^{d - 1}$ in $\mathbb{R}^d$, then $i (A) = d$.

\begin{proposition}[{\cite[Thm 2.1]{MR4999814}}]
  \label{hP}Let $X$ be a Banach space, $\Phi : X \rightarrow \mathbb{R}$ be an
  even $C^1$-functional satisfying $(\tmop{PS})_c$ for $c \in (0, c^{\ast})$
  being $c^{\ast}$ some positive constant. If $0$ is a strict local minimizer
  of $\Phi$ and there are $R > 0$ and a compact symmetric set $A \subset
  \partial B_R$, such that $i (A) = k$,
  \begin{equation}
    \max_{u \in A} \Phi (u) \leq 0 \text{, \qquad} \max_{(t, u) \in [0, 1]
    \times A} \Phi (t u) < c^{\ast} \text{,} \label{h3e}
  \end{equation}
  then $\Phi$ has $k$ pairs of nonzero critical points with positive critical
  values.
\end{proposition}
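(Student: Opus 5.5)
The plan is a pseudo-index minimax argument built on the Fadell--Rabinowitz cohomological index $i$. The aim is $k$ critical levels $0<c_1\le c_2\le\cdots\le c_k<c^{\ast}$, with the usual multiplicity count when some of them coincide.

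\textbf{Step 1: the setup.} Let $\Gamma$ be the group of odd homeomorphisms $h:X\to X$ with $h=\mathrm{id}$ on the sublevel set $\Phi^0=\{\Phi\le 0\}$. Every such $h$ fixes $0$, and $A\subset\Phi^0$ by (\ref{h3e}), so every $h\in\Gamma$ fixes $A$. Fix a small $r\in(0,R)$ with $\Phi>0$ on $\overline{B_r}\setminus\{0\}$; this is possible because $0$ is a strict local minimizer. For compact symmetric $M\subset X$ define the pseudo-index and the minimax levels
\[ i^{\ast}(M)=\min_{h\in\Gamma} i\bigl(h(M)\cap\partial B_r\bigr), \qquad c_j=\inf_{i^{\ast}(M)\ge j}\ \max_{M}\Phi,\quad j=1,\ldots,k. \]
These levels are clearly nondecreasing in $j$.

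\textbf{Step 2: the upper bound $c_k<c^{\ast}$.} Take $M=\{tu:t\in[0,1],\,u\in A\}$, the cone over $A$. Let $h\in\Gamma$. Then $h(M)$ is a compact symmetric set that contains $h(0)=0$ and contains $h(A)=A\subset\partial B_R$. The piercing property of the cohomological index applies to this image of the cone: any set obtained from the cone by an odd map fixing $A$ must meet $\partial B_r$ in a set of index at least $i(A)$. Hence $i(h(M)\cap\partial B_r)\ge i(A)=k$. Therefore $i^{\ast}(M)\ge k$, and by (\ref{h3e})
\[ c_k\le\max_{(t,u)\in[0,1]\times A}\Phi(tu)<c^{\ast}. \]

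\textbf{Step 3: the lower bound $c_1>0$.} Taking $h=\mathrm{id}$ shows that any $M$ with $i^{\ast}(M)\ge1$ meets $\partial B_r$. So it suffices to show
\[ \inf_{\partial B_r}\Phi>0 \]
for some small $r$. This is the step I expect to be the main obstacle. In infinite dimensions a strict local minimum does not by itself give a positive infimum on small spheres, and $(\mathrm{PS})_c$ is only available for $c>0$.

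I would argue by contradiction. Suppose $\inf_{\partial B_\rho}\Phi=0$ for every small $\rho$. Apply Ekeland's principle (or a deformation argument) on a thin annulus $\rho_1\le\|u\|\le\rho_2$. This should produce a $(\mathrm{PS})$ sequence at some level $c\in(0,\varepsilon)$, with $\varepsilon<c^{\ast}$, lying in the annulus. Its limit would be a critical point in a region where the construction forbids one, which is a contradiction.

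If this route fails, I would fall back on a weaker fact that suffices for the present paper. For the functional in Theorem \ref{ht5}, $\inf_{\partial B_r}\Phi>0$ holds directly, because $\Phi(u)\ge\frac12\|u\|^2-o(\|u\|^2)$ near $0$.

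\textbf{Step 4: concluding.} Since $0<c_j<c^{\ast}$, the $(\mathrm{PS})_{c_j}$ condition holds. Apply the standard equivariant deformation lemma at the positive level $c_j$. The deformation can be taken to fix $\Phi^0$, so composing with it preserves $\Gamma$, and $i^{\ast}$ is monotone under such maps. This shows each $c_j$ is a critical value.

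If $c_j=c_{j+m}$ for some $m\ge1$, the usual argument gives a critical set $K_{c_j}$ with $i(K_{c_j})\ge m+1\ge2$. Such a set is infinite, so it contains infinitely many pairs. In every case $\Phi$ has at least $k$ pairs of nonzero critical points at positive levels.
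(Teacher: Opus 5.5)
The paper gives no proof of this proposition; it quotes it from \cite{MR4999814}. Your pseudo-index scheme is the standard route, and Steps 1, 2 and 4 are fine. Step 3, however, is a genuine gap, and it cannot be closed under the stated hypotheses.

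First, the Ekeland argument does not give what you claim. Suppose $\inf_{\partial B_\rho}\Phi=0$. Since $\Phi\ge 0$ near $0$, the infimum of $\Phi$ over any annulus containing $\partial B_\rho$ is also $0$. Ekeland's principle then produces an almost-critical sequence at level $0$, not at a level in $(0,\varepsilon)$, and $(\mathrm{PS})_0$ is not assumed.

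Second, and more seriously, $c_1>0$ can fail outright. Take $X=\ell^2$, $a_n\downarrow 0$ with all $a_n>0$, $L=\mathrm{diag}(a_n)$, $q(u)=\sum_n a_nu_n^2$ and
\[ \Phi(u)=\tfrac12\,q(u)\bigl(1-\tfrac12\|u\|^2\bigr). \]
\begin{itemize}
\item $\Phi$ is even and smooth, $\Phi>0$ for $0<\|u\|<\sqrt2$, and $\Phi<0$ on $\partial B_2$. So $A=\partial B_2\cap\mathrm{span}\{e_1,\dots,e_k\}$ satisfies (\ref{h3e}) with, say, $c^{\ast}=a_1$.
\item $(\mathrm{PS})_c$ holds for every $c>0$. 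Let $\Phi(u_n)\to c>0$ and $\nabla\Phi(u_n)\to 0$. Then $\|u_n\|<\sqrt2$, $q(u_n)\ge 2\Phi(u_n)$, and $\langle\nabla\Phi(u_n),u_n\rangle=q(u_n)(1-\|u_n\|^2)\to 0$. Hence $\|u_n\|\to1$, $q(u_n)\to 4c$ and $(L-4c)u_n\to 0$. Since $L$ is compact, this gives a convergent subsequence.
\item Yet $\Phi(\rho e_n)\to 0$, so $\inf_{\partial B_\rho}\Phi=0$ for every $\rho\in(0,\sqrt2)$.
\item Worse, let $M_N$ be the cone over $\partial B_2\cap\mathrm{span}\{e_N,\dots,e_{N+k-1}\}$. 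By your own Step 2, $i^{\ast}(M_N)\ge k$, while $\max_{M_N}\Phi=a_N/4\to 0$.
\end{itemize}
Hence $c_1=\cdots=c_k=0$, and Step 4 never starts. The conclusion does hold in this example, through the critical points $\pm e_n$ at levels $a_n/4$, but not through your minimax values.

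So your argument proves the proposition only under a stronger hypothesis. One option is $\inf_{\partial B_r}\Phi>0$ for some $r\in(0,R)$. The other is to add $(\mathrm{PS})_0$. With $(\mathrm{PS})_0$ your annulus argument does work: the level-$0$ sequence near $\partial B_\rho$ converges to some $v$ with $\|v\|=\rho$ and $\Phi(v)=0$, contradicting strict minimality.

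Your fallback remark is right that the first condition is what Theorem \ref{ht5} actually supplies, since there $\Phi(u)\ge\frac12\|u\|^2-C\lambda\|u\|^q-C\|u\|^m$ with $q,m>2$. As a proof of the statement as written, however, the degenerate case ($\inf_{\partial B_\rho}\Phi=0$ for all small $\rho$) needs a separate argument, and your minimax scheme cannot supply it.
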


\begin{proof}[Proof of Theorem \ref{ht5}]
  By Lemma \ref{h3l}, $\Phi$ satisfies $(\tmop{PS})_c$ for $c \in (0,
  c^{\ast})$. In both cases $2 < p \le q < m$ and $2 = p < q < m$, by the expression of $\Phi$ it is
  clear that $u = 0$ is a strict local minimizer of $\Phi$.
  
  Given $k \in \mathbb{N}$, let $Z = \{ u \in X \mid \tmop{supp} u \subset
  \Omega \}$ and $Z_k$ be an $k$-dimensional subspace of $Z$. Because $g > 0$
  on $\Omega$,
  \[ [u] = \left( \frac{1}{q} \int g | u |^q \right)^{1 / q} \]
  is a norm on $Z_k$. Since all norms on the finite dimensional space $Z_k$
  are equivalent,
  \begin{align}
    \Phi (u) & =  \frac{1}{2} \| u \|^2 + \frac{1}{p} | \nabla u |_p^p -
    \lambda [u]^q - \frac{1}{m} | u |_m^m \nonumber\\
    & \leq  c_1 \| u \|^2 + c_2 \| u \|^p - \lambda c_3 \| u \|^q - c_4 \| u
    \|^m \nonumber\\
    & =  f (\| u \|) - \lambda c_3 \| u \|^q \text{,} \label{hqe} 
  \end{align}
  where $f (r) = c_1 r^2 + c_2 r^p - c_4 r^m$. Set $A = Z_k \cap \partial B_R$
  for $R > 1$ satisfying $f (R) < 0$. Then $i (A) = k$, $\max_A \Phi < 0$. So $\Phi$ satisfies the first condition in (\ref{h3e}).
  
  Since $f (0) = 0$, there is $\delta > 0$ such that $f (t R) < c^{\ast}$ for
  $t \in [0, \delta]$. Thus by (\ref{hqe})
  \begin{equation}
    \max_{(t, u) \in [0, \delta] \times A} \Phi (t u) \leq \max_{t \in [0,
    \delta]} f (t R) < c^{\ast} \text{.} \label{hf1}
  \end{equation}
  It is clear that there is $\lambda_0 > 0$ such that for $\lambda >
  \lambda_0$ there holds
  \begin{align}
    \max_{(t, u) \in [\delta, 1] \times A} \Phi (t u) & \leq  \max_{t \in
    [\delta, 1]} \{ f (t R) - \lambda c_3 t^q R^q \} \nonumber\\
    & \leq  \max_{t \in [\delta, 1]} f (t R) - \lambda c_3 \delta^q R^q
    \text{} < c^{\ast} \text{.} \label{hf2} 
  \end{align}
  From (\ref{hf1}) and (\ref{hf2}), we see that if $\lambda > \lambda_0$ then
  $\Phi$ satisfies the second condition in (\ref{h3e}). Applying Proposition \ref{hP}, $\Phi$ has $k$ pairs of critical points. So (\ref{he2}) has $k$
  pairs of solutions.
\end{proof}


\end{document}